\newtheorem{lemma}{Lemma}[section]
\newtheorem{thm}[lemma]{Theorem}
\newtheorem{prop}[lemma]{Proposition}
\newtheorem{prop_intro}{Proposition}
\newtheorem{cor_intro}[prop_intro]{Corollary}
\newtheorem{thm_intro}[prop_intro]{Theorem}
\newtheorem*{thm*}{Theorem}
\theoremstyle{definition}
\newtheorem{defn}[lemma]{Definition}
\newtheorem{conj_intro}[prop_intro]{Conjecture}
\newtheorem{rem}[lemma]{Remark}
\newcommand{\matN}{\ensuremath {\mathbb{N}}}
\newcommand{\R} {\ensuremath {\mathbb{R}}}
\newcommand{\Q} {\ensuremath {\mathbb{Q}}}
\newcommand{\Z} {\ensuremath {\mathbb{Z}}}
\newcommand{\matH} {\ensuremath {\mathbb{H}}}
\newcommand{\vare} {\ensuremath{\varepsilon}}
\newcommand{\vol} {{\rm Vol}}
\newcommand{\vola} {{\rm Vol}_{\rm alg}}
\newcommand{\wdtM}{\widetilde{M}}
\newcommand{\calM} {\ensuremath {\mathcal{M}}}
\newcommand{\mathno}{\ensuremath{\overline{\matH^n}}}
\newcommand{\str} {\ensuremath {{\rm str}}}
\newcommand {\bb} {\partial}
\author{Michelle Bucher}
\author{Roberto Frigerio}
\author{Cristina Pagliantini}
\address{Section de Math\'ematiques, 2-4 rue du Li\`evre, Case postale 64, 1211 Gen\`eve 4, Suisse}
\email{michelle.bucher-karlsson@unige.ch}
\address{Dipartimento di Matematica \\
Universit\`a di Pisa \\
Largo B.~Pontecorvo 5 \\
56127 Pisa, Italy}
\email{frigerio@dm.unipi.it}
\address{Department Mathematik, ETH Zentrum, R\"amistrasse 101, 8092 Z\"urich}
\email{cristina.pagliantini@math.ethz.ch}
\thanks{Michelle Bucher was supported by Swiss National Science Foundation 
project PP00P2-128309/1.  The authors thank the Institute Mittag-Leffler in Djursholm, Sweden, 
for their warm hospitality during  the preparation of this paper.}
\title[A quantitative version of a theorem by Jungreis]{A quantitative version of a theorem by Jungreis}
\subjclass[2000]{}
\keywords{}
\thanks{}
\begin{document}

\begin{abstract}
A fundamental result by Gromov and Thurston asserts that, if $M$ is a closed hyperbolic $n$-manifold, then
the simplicial volume $\|M\|$ of $M$ is equal to $\vol (M)/v_n$, where $v_n$ is a constant depending only on the
dimension of $M$. The same result also holds for complete finite-volume hyperbolic manifolds without boundary,
while Jungreis proved that the ratio $\vol (M)/\|M\|$ is strictly smaller than $v_n$ if $M$ is compact with non-empty geodesic boundary.
We prove here a quantitative version of Jungreis' result for $n\geq 4$, which bounds from below the ratio $\|M\|/\vol (M)$
in terms of the ratio $\vol (\partial M)/\vol(M)$. As a consequence, we show that a sequence $\{M_i\}$ of compact hyperbolic $n$-manifolds
with geodesic boundary satisfies $\lim_i \vol(M_i)/\|M_i\|=v_n$ if and only if $\lim_i \vol(\partial M_i)/\vol(M_i)=0$.

We also provide estimates of the simplicial volume of hyperbolic manifolds with geodesic boundary in dimension three. 
\end{abstract}
\maketitle

\section*{Introduction}
The simplicial volume is a homotopy invariant of compact manifolds introduced by Gromov in his pioneering work~\cite{Gromov}.
If $M$ is a connected, compact, oriented manifold with
(possibly empty) boundary, then the simplicial volume of $M$, denoted by $\|M, \bb M\|$, is the 
infimum of the sum of the absolute values of the coefficients over all singular chains representing 
the real fundamental cycle of $M$ (see Section~\ref{preliminary}). If $\bb M=\emptyset$ we denote the simplicial volume of $M$ simply by $\|M\|$.
If $M$ is open, the fundamental class and the simplicial 
volume of $M$ admit analogous definitions in the context of homology of \emph{locally finite} chains, but in this paper we will restrict our attention  to compact manifolds:
unless otherwise stated, henceforth every manifold is assumed to be compact. Observe that the simplicial volume of an oriented manifold does not depend on its orientation and that it is straightforward
 to extend the definition also to  nonorientable or disconnected manifolds:
if $M$ is connected and nonorientable, then its simplicial volume is equal
to one half of the simplicial volume of its orientable double covering, and the simplicial
volume of any  manifold is the sum of the simplicial volumes of its connected components.

Several vanishing and nonvanishing results
for the simplicial volume
are available by now, 
but the exact value of nonvanishing simplicial volumes is known only in a very few cases. 
A celebrated result by Gromov and 
Thurston~\cite{Gromov,Thurston} implies that, if $M$ is a hyperbolic $n$-manifold 
without boundary $M$, then
\begin{equation}\label{closedhyp}
\| M\|=\frac{\vol(M)}{v_n}\ ,
\end{equation}
where $\vol(M)$ is  the Riemannian volume of $M$
and $v_n$ is the volume of the regular ideal geodesic $n$-simplex in hyperbolic space.
In the closed case, the only other exact computation of nonvanishing simplicial volume is for the product of two closed hyperbolic surfaces
or more generally manifolds locally isometric to the product of two hyperbolic planes \cite{Bucher3}. 

After replacing $\|M\|$ with $\|M,\partial M\|$,
equality~\eqref{closedhyp}
holds also when $M$ is the natural compactification of any complete noncompact hyperbolic $n$-manifold of finite volume
without boundary (see \emph{e.g.}~\cite{stefano,FP,FM,BB}). In this case, every component of $\partial M$ supports a Euclidean structure,
so $\|\partial M\|=0$. In the case when $\|\partial M\|\neq 0$, 
the only exact computations of the simplicial volume of compact manifolds with nonempty boundary are provided in \cite{BFP15} 
for products of surfaces with the interval and for compact $3$-manifolds obtained by adding $1$-handles to Seifert manifolds.
Building on these examples, more values for the simplicial volume can be obtained by surgery or by taking 
connected sums or amalgamated sums over submanifolds with amenable fundamental group (see \emph{e.g.}~\cite{Gromov, Kuessner,  BBFIPP}).
However, the exact value of the simplicial volume is not known for any compact $n$-manifold $M$ such that $\|\partial M\|\neq 0$, $n\geq 4$, and
for any hyperbolic $n$-manifold with non-empty geodesic boundary, $n\geq 3$.

\subsection*{Hyperbolic manifolds with geodesic boundary}
Jungreis proved in~\cite{Jung} that, if $M$ is an $n$-dimensional 
hyperbolic manifold
with nonempty geodesic boundary, $n\geq 3$, then 
\begin{equation}\label{jung:eq}
\|M,\bb M\| > \frac{\vol(M)}{v_n} \ .
\end{equation}
In Section~\ref{proof-main:sec}  we provide a quantitative version of Jungreis' result
in the case when $n\geq 4$. More precisely, we prove the following:

\begin{thm_intro}\label{hyp:high:thm}
Let $n\geq 4$. Then there exists a constant $\eta_n>0$ depending only on $n$ such that
$$
\frac{\|M,\bb M\|}{\vol(M)}\geq \frac{1}{v_n}+\eta_n\cdot \frac{\vol(\bb M)}{\vol(M)}\ .
$$
\end{thm_intro}

It is well-known that $\|M,\bb M\|=\vol(M)/v_2=\vol(M)/\pi$
for every hyperbolic surface with geodesic boundary $M$, so Theorem~\ref{hyp:high:thm}
cannot be true in dimension $2$. The $3$-dimensional case is still open.

Theorem~\ref{hyp:high:thm} states that, if $n\geq 4$, then $\vol(M)/\|M,\bb M\|$ cannot
approach $v_n$ unless the 
$(n-1)$-dimensional volume
of $\bb M$ is small with respect to the volume of $M$. 
On the other hand, it is known that 
$\vol(M)/\|M,\bb M\|$ indeed approaches $v_n$ if 
$\vol(\bb M)/\vol(M)$ is small.
In fact, the following result is proved in~\cite{FP} for $n\geq 3$:
for every $\vare>0$ there exists $\delta>0$ such that 
$$
\frac{\vol(\bb M)}{\vol(M)}<\delta\quad\Longrightarrow \quad\frac{\vol(M)}{\|M,\bb M\|}\geq v_n-\vare
$$
for every 
hyperbolic $n$-manifold $M$ with nonempty geodesic
boundary.

Note that in particular, the ratio between $\| M,\bb M\|$ and $\vol(M)$ does not depend 
only on the dimension of $M$. Putting together this result with Theorem~\ref{hyp:high:thm},
we obtain, for $n\geq 4$, a complete characterization of hyperbolic $n$-manifolds 
with geodesic boundary whose simplicial volume is close to the bound given by inequality~\eqref{jung:eq}:

\begin{cor_intro}\label{hyp:high:cor}
Let $n\geq 4$, and let $M_i$ be a sequence of  hyperbolic $n$-manifolds with nonempty geodesic
boundary. Then 
$$
\lim_{i\to \infty} \frac{\vol(M_i)}{\|M_i,\bb M_i\|}=v_n
\quad \Longleftrightarrow \quad
\lim_{i\to \infty} \frac{\vol(\bb M_i)}{\vol(M_i)}=0\ .
$$
\end{cor_intro}

\subsection*{The $3$-dimensional case}
Let $M$ be a boundary irreducible aspherical $3$-manifold. In~\cite[Theorem 1.4]{BFP15}
we proved the following sharp lower bound for the simplicial volume of $M$ in terms of the simplicial volume of $\partial M$:
\begin{equation}\label{54eq}
 \|M,\partial M\|\geq \frac{5}{4} \|\partial M\|\ .
\end{equation}

Every  hyperbolic manifold with geodesic boundary
is aspherical and boundary irreducible.
Therefore,
even if Theorem~\ref{hyp:high:thm} is still open in dimension $3$,  inequality \eqref{54eq} 
may be exploited to show that, if $\vol(\bb M)$ is big with respect to $\vol(M)$, then
indeed the simplicial volume of $M$ is bounded away from ${\vol(M)}/v_3$. In the same spirit, by combining combinatorial
and geometric arguments,
in Section~\ref{proof-hyp:sec} we prove the following result, where 
$$
G=\frac{1}{1^2}-\frac{1}{3^2}+\frac{1}{5^2}-\frac{1}{7^2}+\ldots\,  \approx 0.915965
$$ 
is Catalan's constant.

\begin{thm_intro}\label{hyperbolic:thm}
 Let $M$ be a hyperbolic $3$-manifold with nonempty geodesic boundary.
Then
$$
\|M,\bb M\| \geq \frac{\vol(M)}{v_3}+\frac{v_3-G}{2(3v_3-2G)}\left(7\|\bb M\|-4\frac{\vol(M)}{v_3}\right)\ .
$$
\end{thm_intro}

At the moment, no precise computation of the simplicial volume of 
 hyperbolic $3$-manifold with nonempty geodesic boundary is known. 
Let us briefly introduce some families of examples for which
the bound
provided by Theorem~\ref{hyperbolic:thm} is sharper than both 
Jungreis' bound~\eqref{jung:eq} and bound~\eqref{54eq}.

For every $g\geq 2$ let $\overline\calM_g$ be the set of hyperbolic $3$-manifolds $M$
with connected geodesic boundary such that $\chi(\bb M)=2-2g$ (so $\bb M$, if orientable, is the closed
orientable surface of genus $g$).
Recall that for every $3$-manifold with boundary $M$
the equality $\chi(\bb M)=2\chi (M)$ holds, and in particular $\chi(\bb M)$ is even.
Therefore, the union $\bigcup_{g\geq 2} \overline\calM_g$ coincides with the set
of hyperbolic $3$-manifolds with connected geodesic boundary.

For every $g\geq 2$ we denote by $\calM_g$ the set of 
3-manifolds with boundary $M$ that admit an ideal triangulation by $g$ tetrahedra
and have Euler characteristic $\chi(M)=1-g$
(see Section~\ref{last:sec}
for the definition of ideal triangulation).
Every element of $\calM_g$ has connected boundary 
and supports
a hyperbolic structure with geodesic boundary (which is unique
by Mostow rigidity), hence $\calM_g\subseteq \overline\calM_g$
(see Proposition~\ref{Mg:prop}).
Furthermore, Miyamoto proved in~\cite{KM}
that elements of $\calM_g$ are exactly the ones having the smallest volume
among the elements of $\overline\calM_g$. In particular, $\calM_g$ is nonempty
for every $g\geq 2$.
The eight elements of $\calM_2$ are exactly the smallest hyperbolic manifolds
with nonempty geodesic boundary~\cite{Kojima,KM}. 

As a consequence of Corollary~\ref{hyp:high:cor},
the simplicial volume and the Riemannian volume of hyperbolic $3$-manifolds with nonempty geodesic boundary are 
not related by a universal proportionality constant. Nevertheless, it is reasonable to expect
that these invariants are closely related to each other. Therefore,
we make here the following conjecture:
\begin{conj_intro}\label{minimal:hyp:conj}
For $g\geq 2$, the elements of $\calM_g$ are exactly the ones having the smallest
simplicial volume among the elements of $\overline\calM_g$.
Moreover, the eight elements of $\calM_2$ are the hyperbolic manifolds
with nonempty geodesic boundary having the smallest simplicial volume.
\end{conj_intro}

Together with Miyamoto's results about volumes of hyperbolic
manifolds with geodesic boundary~\cite{KM}, Theorem~\ref{hyperbolic:thm}
implies the following (see Section~\ref{last:sec}):

\begin{cor_intro}\label{minimal:cor}
If $M\in \overline\calM_2$, 
then $\|M,\bb M\|\geq 6.461\approx 1.615 \cdot \|\bb M\|$.
If $M\in\overline\calM_3$, then
$\|M,\bb M\|\geq 10.882\approx 1.360 \cdot \|\bb M\|$.
If $M\in\overline\calM_4$, then
$\|M,\bb M\|\geq 15.165\approx 1.264 \cdot \|\bb M\|$.
\end{cor_intro}

As we will see in Section~\ref{last:sec}, the corollary shows that Theorem~\ref{hyperbolic:thm}
indeed improves Jungreis' inequality~\eqref{jung:eq}
and bound~\eqref{54eq} in some cases. More precisely we will show that if $M\in \calM_2\cup\calM_3\cup\calM_4$, the bounds provided by Theorem~\ref{hyperbolic:thm} and Corollary~\ref{minimal:cor} coincide, and
are sharper than the bounds provided by inequalities~\eqref{jung:eq} and \eqref{54eq}, while
if $M\in\calM_g$, $g\geq 5$, then the sharpest known bound for $\|M,\bb M\|$ is provided by inequality~\eqref{54eq}.

\section{Simplicial volume}\label{preliminary}
Let $X$ be a topological space and $Y\subseteq X$ a (possibly empty) subspace of $X$. Let
$R$ be a normed ring. Henceforth we confine ourself to $R=\R,\Q$ or $\Z$, where each of these rings is endowed with the norm given by the absolute value.
For $i\in\matN$ we denote by $C_i(X;R)$ the module
of singular $i$-chains over $R$, \emph{i.e.}~the $R$-module freely generated by the set $S_i(X)$ of singular $i$-simplices
in $X$, and we set as usual $C_i(X,Y;R)=C_i(X;R)/C_i(Y;R)$.
Notice that we will  identify $C_i(X,Y;R)$ with the free $R$-module generated
by $S_i(X)\setminus S_i(Y)$. In particular, for $z\in C_i(X,Y;R)$, it will be understood from the
equality 
$z=\sum_{k=1}^n a_k\sigma_k$ that $\sigma_k\neq\sigma_h$ for $k\neq h$, and $\sigma_k\notin S_i(Y)$ for every $k$. 
We denote by $H_\ast (X,Y;R)$ the singular homology of the pair $(X,Y)$ with coefficients
in $R$, \emph{i.e.}~the homology of the complex $(C_{\ast} (X,Y;R),d_\ast)$,
where $d_\ast$ is the usual differential.

We endow the $R$-module $C_i (X,Y;R)$ with the $L^1$-norm defined by
$$
\left\| \sum_{\sigma} a_\sigma\sigma \right\|_R=\sum_{\sigma} |a_\sigma|\ ,
$$
where $\sigma$ ranges over the simplices in 
$S_i (X)\setminus S_i(Y)$. We denote simply by $\|\cdot \|$ the norm $\|\cdot\|_\R$.
The norm $\|\cdot \|_R$
descends to a seminorm on $H_\ast (X,Y;R)$, which is still denoted by $\|\cdot \|_R$ and
is defined as follows:
if $\alpha\in H_i (X,Y;R)$, then
$$
\|\alpha \|_R  =  \inf \{\|\beta \|_R,\, \beta\in C_i (X,Y;R),\, d\beta=0,\, [\beta ]=\alpha  \} \ .
$$
The \emph{real} singular homology module $H_\ast(X,Y;\R)$ 
and the seminorm $\|\cdot\|_\R$ will be simply denoted
by $H_\ast(X,Y)$ and $\|\cdot\|$ respectively.

If $M$ is a connected oriented  $n$-manifold with (possibly empty) boundary $\bb M$,
then we denote by $[M,\bb M]_R$ the fundamental class of the pair $(M,\partial M)$ with coefficients
in $R$. 
The following definition is due to Gromov~\cite{Gromov, Thurston}:

\begin{defn}
The \emph{simplicial volume} of $M$ is 
$$\|M,\bb M\|= \|[M,\bb M]_\R\|=\| [M,\bb M]_\R\|_\R\ .$$ 
The \emph{rational}, respectively \emph{integral}, simplicial volume of $M$
is defined as $\|M,\bb M\|_\Q=\|[M,\bb M]_\Q\|_\Q$, respectively $\|M,\bb M\|_\Z=\|[M,\bb M]_\Z\|_\Z$.
\end{defn}

Just as in the real case, the  rational and the  integral
simplicial volume may be defined also when $M$ is disconnected or nonorientable.
Of course we have the inequalities $\| M,\bb M\|\leq \|M,\bb M\|_\Q\leq \|M,\bb M\|_\Z$.
Using that $\Q$ is dense in $\R$, it may be shown  
in fact that $\|M,\bb M\|=\|M,\bb M\|_\Q$
and  we provide here a complete proof of this fact.
\begin{prop}\label{rational:prop}
For every $n$-manifold $M$, the real and rational simplicial volumes are equal,
$$
\| M,\bb M\|=\| M,\bb M\|_\Q\ .
$$
\end{prop}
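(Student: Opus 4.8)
The plan is to show the two seminorms agree by proving each inequality. The inequality $\|M,\bb M\| \leq \|M,\bb M\|_\Q$ is immediate since every rational fundamental cycle is in particular a real fundamental cycle. The content is the reverse inequality: given a real fundamental cycle $z = \sum_{k=1}^{n} a_k \sigma_k \in C_n(M,\bb M;\R)$ with $d z = 0$ and $\|z\| < \|M,\bb M\| + \vare$, I want to produce a rational fundamental cycle of norm at most $\|z\| + \vare$ (or some comparable quantity). The naive idea of rounding the coefficients $a_k$ to nearby rationals fails because it destroys the cycle condition $dz = 0$ in $C_n(M,\bb M;\R)$; the real work is to correct for this.

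The clean way I would carry this out is via linear algebra over the space of chains supported on the finite simplex set appearing in $z$, together with its faces. Let $C_\ast$ denote the finite-dimensional $\R$-subcomplex of $C_\ast(M,\bb M;\R)$ spanned by the simplices occurring in $z$ and $dz$, all their faces, and so on down (a finite set since each simplex has finitely many faces). This subcomplex is defined over $\Q$ — it is $C_\ast^\Q \otimes_\Q \R$ where $C_\ast^\Q$ is the analogous $\Q$-subcomplex — and the relevant objects ($d_n$, the vector $z$, the relative cycle condition, and the condition of representing $[M,\bb M]_R$, which is detected by $H_n(C_\ast, \text{boundary part})$ being one-dimensional and the relevant coefficient being $1$) are all cut out by linear equations with rational coefficients. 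So $z$ lies in an affine subspace $\mathcal{A} \subseteq C_n^\Q \otimes \R$ defined over $\Q$, namely the set of relative cycles in $C_n$ representing the fundamental class. The key point: $\mathcal{A}$ is a nonempty rational affine subspace, hence $\mathcal{A} \cap (C_n^\Q)$ is dense in $\mathcal{A}$ (rational points are dense in a rational affine subspace). Since the $L^1$-norm is continuous, choosing a rational point $z'$ sufficiently close to $z$ gives a rational fundamental cycle with $\|z'\|_\Q = \|z'\| \leq \|z\| + \vare < \|M,\bb M\| + 2\vare$. Letting $\vare \to 0$ yields $\|M,\bb M\|_\Q \leq \|M,\bb M\|$, completing the proof.

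The main technical point to get right is the claim that representing the fundamental class is a rational-linear condition, and in particular that the affine space $\mathcal{A}$ is defined over $\Q$ and nonempty. Concretely: $H_n(M,\bb M;\Q)$ is one-dimensional with a canonical generator $[M,\bb M]_\Q$ (which maps to $[M,\bb M]_\R$ under $\otimes \R$), and a relative cycle $w \in C_n(M,\bb M;\Q)$ represents $[M,\bb M]_\Q$ iff $w - z_0$ is a boundary plus a chain in $C_n(\bb M;\Q)$, for any fixed rational reference cycle $z_0$ representing $[M,\bb M]_\Q$; all of this restricts to the finite subcomplex $C_\ast^\Q$ once we enlarge it to contain $z_0$ and the relevant bounding chains. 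This shows $\mathcal{A} = \{z \in \mathcal{A}\} = z_0 + (\text{rational linear subspace})$, so it is a rational affine subspace containing the real point $z$; being nonempty and rational, its rational points are dense. I would isolate this as the heart of the argument. The rest — continuity of the $L^1$-norm, the trivial inequality, and the limiting argument — is routine.

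\begin{proof}
The inequality $\|M,\bb M\|\leq \|M,\bb M\|_\Q$ is immediate, since every rational relative cycle representing $[M,\bb M]_\Q$ is also a real relative cycle representing $[M,\bb M]_\R$, and $\|\cdot\|_\Q$ and $\|\cdot\|_\R$ agree on rational chains.

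For the reverse inequality, fix $\vare>0$ and let $z=\sum_{k=1}^{m}a_k\sigma_k\in C_n(M,\bb M;\R)$ be a relative cycle representing $[M,\bb M]_\R$ with $\|z\|<\|M,\bb M\|+\vare$. Fix also a rational relative cycle $z_0\in C_n(M,\bb M;\Q)$ representing $[M,\bb M]_\Q$, so that $z-z_0=d\beta+\gamma$ for some $\beta\in C_{n+1}(M;\R)$ and $\gamma\in C_n(\bb M;\R)$. Let $T$ be the (finite) set of all singular simplices of $M$ that appear in $z$, $z_0$, $\beta$, $\gamma$, or in any iterated face of these, and let $C_\ast^\Q\subseteq C_\ast(M;\Q)$ be the subcomplex spanned by the simplices in $T$; set $C_\ast=C_\ast^\Q\otimes_\Q\R$. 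All of $z,z_0,\beta,\gamma$ lie in $C_\ast$, and the conditions ``$w\in C_n\cap C_n(M,\bb M;\R)$ is a relative cycle representing $[M,\bb M]_\R$'' translate, since $H_n(M,\bb M;\Q)$ is one-dimensional with generator $[M,\bb M]_\Q$, into the single affine-linear condition $w\equiv z_0 \pmod{\,d C_{n+1}+C_n(\bb M)\,}$ inside the finite-dimensional space $C_n$. The coefficients of these equations are rational, so the solution set $\mathcal{A}$ is a nonempty affine subspace of $C_n$ defined over $\Q$; it contains the real point $z$.

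Since $\mathcal{A}$ is a rational affine subspace, its rational points $\mathcal{A}\cap C_n^\Q$ are dense in $\mathcal{A}$. The $L^1$-norm $\|\cdot\|$ is continuous on the finite-dimensional space $C_n$, so we may choose $z'\in\mathcal{A}\cap C_n^\Q$ with $\|z'-z\|<\vare$. Then $z'$ is a rational relative cycle representing $[M,\bb M]_\Q$, and
$$
\|M,\bb M\|_\Q\leq \|z'\|_\Q=\|z'\|\leq \|z\|+\vare<\|M,\bb M\|+2\vare\ .
$$
Letting $\vare\to 0$ gives $\|M,\bb M\|_\Q\leq \|M,\bb M\|$, and hence equality.
\end{proof}
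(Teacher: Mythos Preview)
Your proof is correct and follows essentially the same approach as the paper's: both exploit that the relevant cycle conditions are cut out by rational linear equations, so rational points are dense, and then use continuity of the $L^1$-norm. The only cosmetic difference is that the paper approximates within the linear subspace of relative cycles supported on $\sigma_1,\ldots,\sigma_k$ and then rescales by the factor $\lambda_j\to 1$ detected via a dual cocycle, whereas you enlarge the finite subcomplex to contain a rational reference cycle $z_0$ and a bounding chain, and approximate directly within the rational affine subspace of fundamental cycles---this avoids the normalization step at the cost of a slightly larger auxiliary complex.
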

\begin{proof}
We have to show that $\|M,\bb M\|_\Q\leq \|M, \bb M\|$. Let $\varepsilon>0$ be fixed, and let $z=\sum_{i=1}^k a_i\sigma_i$
be a real fundamental cycle for $M$ such that $\| z\|=\sum_{i=1}^k |a_i|\leq \|M,\bb M\|+\varepsilon$.
We set
$$
H_\R=\left\{(x_1,\ldots,x_k)\in\R^k\, \big|\, \sum_{i=1}^k x_i\sigma_i\ {\rm is\ a\ relative\ cycle}\right\} \subseteq \R^k\ .
$$
Of course, $H_\R$ is a linear subspace of $\R^k$. Since $H_\R$ is defined by a system of equations
with integral coefficients, if $H_\Q=H_\R\cap \Q^k$, then $H_\Q$ is dense in $H_\R$. As a consequence, we may find sequences
of rational coefficients
$\{\alpha_i^j\}_{j\in \mathbb{N}}\subseteq \Q$, $i=1,\ldots,k$ such that 
$z^j=\sum_{i=1}^k \alpha_i^j \sigma_i$ is a rational cycle for every $j\in\mathbb{N}$, and
$\lim_j \alpha_i^j=a_i$ for every $i=1,\ldots,k$. This implies in particular that
$\lim_j \|z^j\|_\Q=\| z\|$, so we are left to show that the $z^j$'s may be chosen among the representatives
of the rational fundamental class of $M$.

Let $\lambda_j\in \mathbb{Q}$ be defined by $[z^j]=\lambda_j\cdot[M,\bb M]$
(such a $\lambda_j$ exists because $[M,\bb M]$ lies in the image of $H_n(M,\bb M;\mathbb{Q})$
in $H_n(M,\bb M;\mathbb{R})$ under the change of coefficients homomorphism). 
The Universal Coefficient Theorem provides a real cocyle $\varphi\colon C_n(M,\bb M;\mathbb{R})\to \mathbb{R}$
such that $\varphi(z)=1$. Observe that $\varphi(z^j)=\lambda_j$, so from
$\lim_j \alpha_i^j=a_i$ we deduce that $\lim_j \lambda_j=\lim_j \varphi(z^j)=\varphi(z)=1$.
For large $j$ we may thus define
$w^j=\lambda_j^{-1}\cdot z^j\in C_n(M,\partial M;\Q)$, and by construction $w^j$ represents the 
rational fundamental class of $M$. Finally, we have
$$
\lim_j \| w^j\|_\Q=\lim_j \frac{\|z^j\|_\Q}{\lambda_j}=\| z\|\leq \|M,\bb M \|+\vare\ ,
 $$
which finishes the proof of the proposition.
\end{proof}

On the contrary, the  inequality $\|M,\bb M\|\leq \|M,\bb M\|_\Z$
is not an equality in general, for instance $\|S^1\|=0$ but $\|S^1\|_\Z\geq 1$.
The integral simplicial volume 
does not behave as nicely as the rational or real simplicial volume.
For example,
it follows from the definition that $\|M \|_\Z\geq 1$ for every manifold $M$. Therefore, the integral simplicial volume cannot be multiplicative with respect to finite coverings (otherwise
it should vanish on manifolds that admit finite nontrivial self-coverings,
as $S^1$). 
Nevertheless, we will use integral cycles extensively,
as they admit a clear geometric interpretation in terms of pseudomanifolds (see~Section~\ref{main:sec}). 
In order to follow this strategy, we need the following obvious
consequence of the equality $\|M,\partial M\|_\Q=\|M,\partial M\|$.

\begin{lemma}\label{integral:estimate:lemma}
Let $M$ be connected and oriented, and let $\varepsilon>0$ be given. Then, there exists an integral cycle
$z\in C_n(M,\partial M;\Z)$ such that 
$$
\frac{\|z\|_\Z}{d}\leq \|M,\bb M\|+\vare\ ,
$$
where $[z]=d\cdot [M,\bb M]_\Z$ and $d> 0$ is an integer.
\end{lemma}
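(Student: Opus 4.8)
The plan is to deduce Lemma~\ref{integral:estimate:lemma} directly from Proposition~\ref{rational:prop}, using only the fact that rational cycles can be cleared of denominators. First I would invoke Proposition~\ref{rational:prop}: since $\|M,\bb M\|=\|M,\bb M\|_\Q$, for the given $\varepsilon>0$ there exists a rational relative cycle $w\in C_n(M,\partial M;\Q)$ with $[w]=[M,\bb M]_\Q$ and $\|w\|_\Q\leq \|M,\bb M\|+\varepsilon$. Write $w=\sum_{i=1}^k (p_i/q_i)\,\sigma_i$ with $p_i\in\Z$, $q_i\in\matN\setminus\{0\}$, and let $d$ be a common multiple of the $q_i$, say $d=\mathrm{lcm}(q_1,\ldots,q_k)$, so that $d>0$.

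Next I would set $z=d\cdot w=\sum_{i=1}^k (d\, p_i/q_i)\,\sigma_i$. Each coefficient $d\, p_i/q_i$ is an integer, so $z\in C_n(M,\partial M;\Z)$; moreover $z$ is still a relative cycle because $d$ is a scalar and $d_\ast$ is linear, and under the change-of-coefficients map $C_n(M,\partial M;\Z)\to C_n(M,\partial M;\Q)$ the class of $z$ maps to $d\cdot[w]=d\cdot[M,\bb M]_\Q$. Since the change-of-coefficients homomorphism sends $[M,\bb M]_\Z$ to $[M,\bb M]_\Q$ and is injective on the fundamental class (the relevant $H_n$ being free, so that the integral-to-rational map is injective there), it follows that $[z]=d\cdot[M,\bb M]_\Z$ in $H_n(M,\partial M;\Z)$. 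Finally, $\|z\|_\Z=\sum_i |d\,p_i/q_i| = d\sum_i |p_i/q_i| = d\,\|w\|_\Q$, so $\|z\|_\Z/d=\|w\|_\Q\leq \|M,\bb M\|+\varepsilon$, as required.

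The only genuinely delicate point is the homological bookkeeping: one must be sure that multiplying a rational fundamental cycle by $d$ and observing that the coefficients become integral really does produce a cycle representing $d\cdot[M,\bb M]_\Z$ in \emph{integral} homology, rather than merely a cycle whose image in rational homology is correct. This is settled by noting that $H_n(M,\partial M;\Z)$ contains $[M,\bb M]_\Z$ as a non-torsion element and that the universal coefficient map $H_n(M,\partial M;\Z)\to H_n(M,\partial M;\Q)$ is injective on the subgroup generated by $[M,\bb M]_\Z$; hence the identity $[z]\mapsto d\cdot[M,\bb M]_\Q$ forces $[z]=d\cdot[M,\bb M]_\Z$. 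Everything else is a one-line denominator-clearing argument, so I expect the proof to be very short.
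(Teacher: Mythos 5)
Your proof is correct and is exactly the denominator-clearing argument the paper has in mind: the paper presents the lemma as an ``obvious consequence'' of Proposition~\ref{rational:prop} and gives no written proof, and your derivation (take a rational cycle $w$ representing $[M,\bb M]_\Q$ with $\|w\|_\Q$ close to $\|M,\bb M\|$, multiply by the lcm $d$ of the denominators, and observe $\|z\|_\Z/d=\|w\|_\Q$) is the intended one. The only point worth tightening is the homological bookkeeping at the end: to conclude $[z]=d\cdot[M,\bb M]_\Z$ it is cleanest to note that for a connected compact oriented $n$-manifold $H_n(M,\partial M;\Z)\cong\Z$ is torsion-free, so the change-of-coefficients map $H_n(M,\partial M;\Z)\to H_n(M,\partial M;\Q)$ is injective on all of $H_n$ (not merely on the cyclic subgroup generated by the fundamental class, which as phrased does not by itself rule out $[z]$ lying outside that subgroup); once injectivity of the whole map is invoked, the identity $[z]\mapsto d\cdot[M,\bb M]_\Q$ indeed forces $[z]=d\cdot[M,\bb M]_\Z$.
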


Moreover, the boundary of a fundamental cycle for $M$ is equal to the sum of one fundamental cycle for each component
of $\partial M$, so we also have:
\begin{equation}\label{eq:boundary}
\|\partial M\|\leq \frac{\|\partial z\|_\Z}{d}\ .
\end{equation}

\begin{rem} The statements and the proofs of Proposition \ref{rational:prop} and Lemma \ref{integral:estimate:lemma} hold more generally after replacing the fundamental class $[M,\partial M]_{\mathbb{Q}}$ by any rational homology class.
In other words, for every $i\in\mathbb{N}$ the change of coefficients map $H_i(M,\bb M;\mathbb{Q})\to H_i(M,\bb M;\mathbb{R})$
is norm-preserving.
\end{rem} 
 
Finally, let us list some elementary properties of the simplicial volume which will be needed later. 

\begin{prop}[\cite{Gromov}]\label{degree:prop}
 Let $M,N$ be connected oriented manifolds of the same dimension, and suppose that
either $M,N$ are both closed, or they both have nonempty boundary. Let $f\colon N\to M$
be a map of degree $d$. Then
$$
\|N,\bb N\|\geq |d|\cdot \| M,\bb M\| \ .
$$ 
\end{prop}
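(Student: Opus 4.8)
The plan is the standard functoriality argument for the simplicial volume. The hypothesis that $f\colon N\to M$ has degree $d$ should be read, in the bounded case, as saying that $f$ is a map of pairs $f\colon (N,\bb N)\to(M,\bb M)$ whose induced map on top relative homology is multiplication by $d$ (in the closed case $\bb N=\bb M=\emptyset$ and this is the usual notion). The first step is therefore to note that, for every $i$, such an $f$ induces an $R$-linear chain map
$$
f_\#\colon C_i(N,\bb N;\R)\longrightarrow C_i(M,\bb M;\R),
$$
sending a singular simplex $\sigma$ to $f\circ\sigma$ (and killing it in the quotient if $f\circ\sigma$ takes values in $\bb M$). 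Since $f_\#$ maps simplices to simplices, possibly identifying distinct simplices or sending some of them into $\bb M$, it does not increase the $L^1$-norm: $\|f_\#(c)\|\leq\|c\|$ for every $c\in C_i(N,\bb N;\R)$.

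The second step records the homological input. By definition of the degree,
$$
f_*\big([N,\bb N]_\R\big)=d\cdot[M,\bb M]_\R\in H_n(M,\bb M;\R).
$$
Now fix $\vare>0$ and choose a real relative cycle $z\in C_n(N,\bb N;\R)$ representing $[N,\bb N]_\R$ with $\|z\|\leq\|N,\bb N\|+\vare$. Then $f_\#(z)$ is a relative cycle representing $d\cdot[M,\bb M]_\R$, so by the definition of the seminorm on homology together with its homogeneity under real scalars,
$$
|d|\cdot\|M,\bb M\|=\big\|d\cdot[M,\bb M]_\R\big\|\leq\|f_\#(z)\|\leq\|z\|\leq\|N,\bb N\|+\vare.
$$
Letting $\vare\to 0$ yields $\|N,\bb N\|\geq|d|\cdot\|M,\bb M\|$. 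When $d=0$ the inequality is trivial, since the simplicial volume is nonnegative.

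There is essentially no obstacle here: the argument is purely formal, relying only on the norm non-increasing property of $f_\#$ and the homogeneity of the seminorm on $H_n(M,\bb M;\R)$. The only point deserving a word of care is the bounded case, where one must ensure that $f$ sends $\bb N$ into $\bb M$ so that $f_\#$ genuinely descends to the relative chain complexes; as noted, this is part of what it means for $f$ to have a well-defined degree between compact oriented manifolds with boundary.
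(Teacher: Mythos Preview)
Your argument is correct and is the standard proof of this fact. Note that the paper does not actually prove this proposition: it is stated with a citation to Gromov and no proof is given, so there is nothing to compare against. Your write-up is fine; the one point worth making explicit (which you use tacitly) is that the $L^1$-seminorm on homology is homogeneous, i.e.\ $\|d\cdot\alpha\|=|d|\cdot\|\alpha\|$ for real scalars $d$, which follows immediately from the definition since cycles can be rescaled.
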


The following well-known result describes the simplicial volume of 
closed surfaces. In fact, the same statement also holds for connected surfaces with boundary.
 
\begin{prop}[\cite{Gromov}]\label{vol:surf:prop}
Let $S$ be a closed surface. Then
$$
\| S\|=\max \{0,-2\chi (S)\}\ .
$$
\end{prop}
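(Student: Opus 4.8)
The plan is to split the argument according to the sign of $\chi(S)$, and in the negative case to reduce to the orientable situation, where the statement becomes a two-dimensional instance of the Gromov--Thurston equality~\eqref{closedhyp} combined with Gauss--Bonnet. By the classification of closed surfaces, $\chi(S)\ge 0$ exactly when $S$ is $S^2$, $\R P^2$, the torus $T^2$, or the Klein bottle $K$; otherwise $S$ carries a hyperbolic metric (directly if $S$ is orientable of genus $g\ge 2$, and via its orientable double cover otherwise).

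First suppose $\chi(S)\ge 0$, so that $\max\{0,-2\chi(S)\}=0$ and I must show $\|S\|=0$. The torus and the sphere admit self-maps of every degree $d\ge 1$: for instance $(x,y)\mapsto(2x,2y)$ on $\R^2/\Z^2$, and $z\mapsto z^d$ on $\matC\cup\{\infty\}\cong S^2$. Hence Proposition~\ref{degree:prop} gives $\|T^2\|\ge d\,\|T^2\|$ and $\|S^2\|\ge d\,\|S^2\|$ for all $d$, forcing $\|T^2\|=\|S^2\|=0$. Since $\R P^2$ and $K$ are doubly covered by $S^2$ and $T^2$ respectively, the convention recalled in the introduction (the simplicial volume of a nonorientable manifold equals one half that of its orientable double cover) yields $\|\R P^2\|=\|K\|=0$ as well.

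Now suppose $\chi(S)<0$. If $S$ is orientable, it is hyperbolic, so~\eqref{closedhyp} applied in dimension $n=2$ (where $v_2=\pi$ is the area of the ideal geodesic triangle) gives $\|S\|=\vol(S)/\pi$; Gauss--Bonnet gives $\vol(S)=-2\pi\chi(S)$, whence $\|S\|=-2\chi(S)=\max\{0,-2\chi(S)\}$. If $S$ is nonorientable, its orientable double cover $\widetilde S$ has $\chi(\widetilde S)=2\chi(S)<0$, hence is an orientable surface of genus $\ge 2$, and the previous case together with the doubling convention gives $\|S\|=\tfrac12\|\widetilde S\|=\tfrac12\bigl(-2\chi(\widetilde S)\bigr)=-2\chi(S)$. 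This completes the proof.

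The one place where genuine content is used is~\eqref{closedhyp}, so I would add a remark on how to proceed in the case $\chi(S)<0$ without invoking it. The lower bound $\|S\|\ge -2\chi(S)$ is elementary: straightening a real fundamental cycle $z=\sum a_i\sigma_i$ with respect to the hyperbolic structure yields a homologous fundamental cycle $\sum a_i\sigma_i^{\str}$ whose simplices are geodesic triangles, and evaluating the area form $\omega_S$ on it gives $\vol(S)=\sum a_i\int_{\sigma_i^{\str}}\omega_S\le \pi\sum|a_i|=\pi\|z\|$, because a geodesic triangle in $\matH^2$ has area $<\pi$; since this holds for every fundamental cycle, $\|S\|\ge\vol(S)/\pi=-2\chi(S)$. (Equivalently, the bounded Euler class has norm $\le \tfrac12$ and pairs to $\chi(S)$ with $[S]$.) The reverse inequality $\|S\|\le -2\chi(S)$ is the delicate point, and the main obstacle: one must produce fundamental cycles of $L^1$-norm arbitrarily close to $-2\chi(S)$, while naive triangulations are wasteful (a one-vertex triangulation of the genus-$g$ surface has $4g-2$ triangles, not $4g-4$). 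This is exactly where Gromov's and Thurston's ``smearing'' construction is needed — averaging the $\pi_1(S)$-translates of a regular ideal triangle over $\mathrm{PSL}_2(\R)/\pi_1(S)$ to build a measure fundamental cycle of norm $\vol(S)/\pi$ and then approximating it by finite rational cycles — which is essentially the proof of~\eqref{closedhyp}. I expect that approximation step to be the only genuinely involved part; everything else above is routine.
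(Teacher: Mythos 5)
The paper states Proposition~\ref{vol:surf:prop} as a citation to Gromov without giving its own proof, so there is nothing internal to compare against; your task is just to supply a correct argument. Your main proof does this cleanly: the vanishing for $\chi(S)\ge 0$ via degree-$d$ self-maps and Proposition~\ref{degree:prop} is exactly the standard argument, and for $\chi(S)<0$ reducing to the Gromov--Thurston equality~\eqref{closedhyp} with $v_2=\pi$ together with Gauss--Bonnet, and then to the orientable case via the double-cover convention recalled in the introduction, is correct and is the natural route given what the paper already records.

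One imprecision in your closing remark is worth flagging. You assert that the upper bound $\|S\|\le -2\chi(S)$ \emph{requires} the smearing construction. In dimension two it does not: a closed orientable surface $S_g$ of genus $g\ge 2$ admits, for every $d\ge 1$, a degree-$d$ cover $S_{g'}$ with $g'=d(g-1)+1$. A one-vertex triangulation of $S_{g'}$ (take a $4g'$-gon with side identifications, add a vertex in the centre, cone) has $4g'-2=4d(g-1)+2$ triangles, which gives an integral fundamental cycle of that norm, hence $\|S_{g'}\|\le 4d(g-1)+2$. Multiplicativity of $\|\cdot\|$ under coverings (a consequence of Proposition~\ref{degree:prop} applied to the covering and to a transfer) then yields $\|S_g\|\le 4(g-1)+2/d$ for all $d$, and letting $d\to\infty$ gives $\|S_g\|\le 4(g-1)=-2\chi(S_g)$. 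So in dimension two the hard direction is available by a purely combinatorial covering argument, with no need for Gromov--Thurston smearing; the smearing machinery is what one reaches for in higher dimensions, where no analogous efficient triangulations of covers are known. This does not affect the correctness of your main proof, only the accuracy of the aside.
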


\section{Representing cycles via pseudomanifolds}\label{main:sec}
This section is devoted to recall the notion of pseudomanifold and the interpretation of integral cycles 
by means of pseudomanifolds. 

\subsection*{Pseudomanifolds}
Let $n\in\matN$. An \emph{$n$-dimensional pseudomanifold} $P$ consists of a finite number of copies
of the standard $n$-simplex, a choice of pairs of $(n-1)$-dimensional faces of $n$-simplices such that each face
appears in at most one of these pairs, and an affine
identification between the faces of each pair. We allow pairs of distinct faces in the same $n$-simplex. 
It is \emph{orientable} if orientations on the simplices of $P$
may be chosen in such a way that the affine identifications between the paired faces
(endowed with the induced orientations)
are all orientation-reversing.
A face which does not belong to any 
pair of identified faces is a \emph{boundary} face.

We denote 
by $|P|$ the \emph{topological realization} of $P$, \emph{i.e.}~the quotient space of the union of the simplices
by the equivalence relation generated by the identification maps.
We say that
$P$ is connected if $|P|$ is.
We denote by $\partial |P|$
the image in $|P|$ of the boundary faces of $P$, and 
we say that $P$ is \emph{without boundary} if $\partial |P|=\emptyset$.

A \emph{$k$-dimensional face}
of $|P|$ is the image in $|P|$ of a $k$-dimensional face of a simplex of $P$.
Usually, we refer to $1$-dimensional, respectively $0$-dimensional
faces of $P$ and  $|P|$ as to \emph{edges}, respectively \emph{vertices}
of $P$ and $|P|$.


It is well-known that, 
if $P$ is an $n$-dimensional pseudomanifold, $n\geq 3$, then $|P|$ does not need to be a manifold.
However, 
in the $3$-dimensional orientable case, singularities may occur only at vertices
(and it is not difficult to construct examples where they indeed occur).
Let us be more precise, and state 
the following well-known result
(see \emph{e.g.}~\cite[pages 108-109]{Hat}):

\begin{lemma}\label{basic:gluing:lemma}
 Let $P$ be an orientable $n$-dimensional pseudomanifold, 
and let $V_k\subseteq |P|$ be the union
of the $k$-dimensional faces of $|P|$. 
Then $|P|\setminus V_{n-3}$ is an orientable
manifold. In particular, if $P$ is an orientable $2$-dimensional pseudomanifold without boundary,
then $|P|$ is an orientable surface without boundary.
\end{lemma}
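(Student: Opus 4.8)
The plan is to prove Lemma~\ref{basic:gluing:lemma} by a local analysis around each point of $|P|$, stratified by the dimension of the smallest face containing the point. Let $p\in|P|\setminus V_{n-3}$, and let $F$ be the open face of $|P|$ containing $p$; by hypothesis $\dim F\geq n-2$. First I would handle the generic strata: if $\dim F=n$, then $p$ lies in the interior of a single simplex of $P$ and a Euclidean neighbourhood is immediate; if $\dim F=n-1$, then either $F$ is a boundary face (and $p$ has a half-space neighbourhood, contributing to $\partial|P|$) or $F$ arises from a single pair of identified $(n-1)$-faces, so a neighbourhood of $p$ is the union of two half-spaces glued along their common boundary hyperplane, which is again Euclidean. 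The orientation-compatibility of the gluing is exactly what makes the two local charts agree on overlaps in an orientation-preserving way, so orientability is inherited on this part.

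The substantive case is $\dim F=n-2$. Here I would argue that a small neighbourhood of $p$ in $|P|$ has the form $D^{n-2}\times C$, where $C$ is the topological realization of a $1$-dimensional pseudomanifold without boundary obtained as the "link" of $F$: its vertices correspond to the $n$-simplices of $P$ abutting $F$ along a copy of $F$, and its edges to the $(n-1)$-faces through $F$, paired according to the face identifications of $P$. Since every $(n-1)$-face through an interior point of an $(n-2)$-face is itself paired (otherwise $p$ would lie in a boundary face, contradicting $\dim F=n-2$ being the smallest face through $p$ when that face is not a boundary face — I would need to note that a point of $|P|$ not in $V_{n-3}$ either has its minimal face of dimension $n-2$ with all adjacent $(n-1)$-faces identified, or lies on the boundary via the $(n-1)$-stratum already treated), this link pseudomanifold $C$ is without boundary. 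A connectedness argument around $F$ then shows $C$ is connected, and the orientability of $P$ forces the orientability of $C$. Invoking the one-dimensional classification, $C$ is a circle, hence $D^{n-2}\times C\cong D^{n-2}\times S^1$ is an $n$-manifold, and the product orientation glues compatibly; thus $p$ has a manifold neighbourhood.

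For the final sentence of the statement — that an orientable $2$-dimensional pseudomanifold $P$ without boundary has $|P|$ an orientable surface without boundary — I would simply observe that $V_{n-3}=V_{-1}=\emptyset$ when $n=2$, so the general statement gives that $|P|$ is an orientable manifold, and it is $2$-dimensional, hence a surface; the absence of boundary faces gives $\partial|P|=\emptyset$, so there are no half-space points and $|P|$ is closed.

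The main obstacle I anticipate is making the link construction in the $\dim F=n-2$ case precise: one must check that the combinatorial link of an $(n-2)$-face is genuinely a $1$-dimensional pseudomanifold (each link-vertex lies in at most two link-edges, corresponding to the at-most-two $(n-1)$-faces of a given $n$-simplex containing a fixed $(n-2)$-face), that "no boundary" transfers correctly, and that the homeomorphism between a neighbourhood of $p$ and $D^{n-2}\times|C|$ respects orientations so that the charts assemble into an oriented atlas. Since this is the standard argument (see~\cite[pages 108-109]{Hat}), I would keep the exposition brief, emphasizing the reduction to the one-dimensional classification and the role of orientability in ruling out the non-orientable link.
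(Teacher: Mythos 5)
Note first that the paper does not prove this lemma --- it is invoked as well-known with a citation to Hatcher --- so there is no in-house proof to compare against. Your strategy (stratify by the minimal face through a point, handle the top two strata by inspection, reduce the codimension-two stratum to a one-dimensional link computation) is the standard and correct one, but the execution has two genuine flaws. The main one is a dimension error: you claim a neighbourhood of $p$ has the form $D^{n-2}\times C$ with $C$ a one-dimensional link and conclude that ``$D^{n-2}\times S^1$ is an $n$-manifold,'' but $D^{n-2}\times S^1$ has dimension $n-1$, and $p$ would have no natural position inside such a product anyway. The correct local model is $D^{n-2}\times (\text{cone on } L)$ where $L$ is the link; $L\cong S^1$ makes the cone homeomorphic to $D^2$ and the product to $D^n$, while $L$ a compact arc gives a half-ball. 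You have conflated the link with its cone.

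The second flaw is in the treatment of $(n-2)$-faces lying in $\partial|P|$. Such faces exist in general, and a point $p$ in the interior of one is \emph{not} contained in any $(n-1)$-dimensional stratum, so it has not ``already been treated''; your parenthetical dismissal --- that $p$ would lie in a boundary face, contradicting minimality of $F$ --- is not a contradiction, because $p$ belongs only to the closure of a boundary $(n-1)$-face, which is compatible with $F$ being the minimal face through $p$. When some $(n-1)$-face through $F$ is a boundary face, the link $L$ is an arc rather than a circle, and $p$ is then a boundary point of $|P|\setminus V_{n-3}$; this case needs its own (easy) chart parallel to the codimension-one boundary case. Finally, the ``connectedness argument around $F$'' deserves at least a sentence, since it is doing the real work that rules out a disjoint union of circles: any two preimages of $F$ in the simplices of $P$ are joined by a chain of $(n-1)$-face identifications (that is exactly how the face class of $F$ in $|P|$ is generated), and each identification in the chain produces an adjacency in $L$, so $L$ is connected. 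Together with the classification of compact connected $1$-manifolds, that is the heart of the argument.
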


Moreover, the boundary of the topological realization of an $n$-dimensional pseudomanifold $P$ is naturally the topological realization
of an (orientable) $(n-1)$-pseudomanifold $\partial P$ without boundary, and an orientation of $P$ canonically induces an
orientation on $\partial P$. In particular, if $P$ is orientable and $3$-dimensional, then $\partial |P|$ is a finite union of orientable
closed surfaces.

\subsection*{The pseudomanifold associated to an integral cycle}\label{gluing:cycle:sub}
Let $M$ be an oriented connected $n$-dimensional manifold with (possibly empty) boundary $\partial M$.
It is well-known that every integral relative cycle on $(M,\partial M)$ can be represented by a map from a suitable pseudomanifold
to $M$. Let us describe this procedure in detail in the case we are interested in, \emph{i.e.}~in the case of $n$-dimensional integral cycles
(see also~\cite[pages 108-109]{Hat}).

Let $z=\sum_{i=1}^k \vare_i \sigma_i$ be an $n$-dimensional relative cycle in
$C_n(M,\bb M;\mathbb{Z})$,
where 
$\sigma_i$ is a singular $n$-simplex on $M$, and
$\vare_i=\pm 1$ for every $i$ (note that here we do not assume that $\sigma_i\neq \sigma_j$ for  $i\neq j$).
We construct an $n$-pseudomanifold associated to $z$ as follows. Let us consider $k$ distinct copies
$\Delta^n_1,\ldots,\Delta^n_k$ of the standard $n$-simplex $\Delta^n$.
For every $i$ we fix an identification between $\Delta^n_i$ and $\Delta^n$, so that we may consider
$\sigma_i$ as defined on $\Delta^n_i$.
For every $i=1,\ldots,k$, $j=0,\ldots, n$, we denote by $F^i_j$ the $j$-th face of $\Delta^n_i$,
and by $\partial^i_j\colon \Delta^{n-1}\to F^i_j\subseteq \Delta_i^n$ the usual face inclusion.
We say that the distinct faces $F^i_j$ and $F^{i'}_{j'}$ form a \emph{canceling pair} if 
$\sigma_i|_{F^i_j}=\sigma_{i'}|_{F^{i'}_{j'}}$ and $(-1)^j\vare_i+(-1)^{j'}\vare_{i'}=0$. This
is equivalent to say that, when computing the boundary $\partial z$ of $z$, the pair of $(n-1)$-simplices
arising from the restrictions of $\sigma_i$ and $\sigma_{i'}$ 
to $F_j^i$ and $F_{j'}^{i'}$ cancel each other. 

Let us define a pseudomanifold $P$ as follows.
The simplices of $P$ are $\Delta^n_1,\ldots,\Delta^n_k$, and we identify the faces
belonging to a maximal collection of canceling pairs
(note that such a family is not uniquely determined). If $F_i^j$, $F_{i'}^{j'}$ are paired faces, we identify them via the affine diffeomorphism
$\partial_{i'}^{j'}\circ (\partial_i^j)^{-1}\colon F_i^j\to F_{i'}^{j'}$.
We observe that $P$ is orientable: in fact,
we can define an orientation on $P$ by endowing $\Delta^n_i$ with the standard orientation
of $\Delta^n$ if $\vare_i=1$, and with the reverse orientation if $\vare_i=-1$.

By construction, the maps $\sigma_1,\ldots,\sigma_k$ glue up to a well-defined continuous map
$f\colon |P|\to M$. For every $i=1,\ldots, k$, let  $\hat{\sigma}_i\colon \Delta^n\to |P|$ be the singular simplex obtained by composing
the identification $\Delta^n\cong \Delta^n_i$ with the quotient map with values in $|P|$,
and let us set $z_P=\sum_{i=1}^k \vare_i \hat{\sigma}_i$.
Then
the chain $z_P$ is a relative cycle in $C_n(|P|,\partial |P|;\Z)$ and the map 
$f_\ast$ induced by $f\colon (|P|,\bb |P|)\to (M,\bb M)$ on integral singular chains sends $z_P$ to $f_\ast(z_P)=z$.

By Lemma~\ref{integral:estimate:lemma}, the simplicial volume 
of a connected oriented $n$-manifold can be computed from \emph{integral} cycles.
By exploiting Thurston's straightening procedure for simplices, 
in Proposition \ref{cyclehyp:prop} we will show 
that such efficient cycles
may be represented by $n$-pseudomanifolds with additional useful properties.

\section{Geometric properties of straight cycles}\label{hyperbolic:sec}
The \emph{straightening procedure} for simplices was introduced by Thurston in~\cite{Thurston},
in order to bound from below the simplicial volume of hyperbolic manifolds. 
The universal covering of a hyperbolic $n$-manifold with geodesic boundary
is a convex subset of the hyperbolic 
space $\matH^n$, and the support of any straight simplex
is just the image of a geodesic simplex of $\matH^n$ via the universal covering
projection. 
As a consequence, to compute the simplicial volume of a hyperbolic manifold with geodesic boundary we may restrict to considering only cycles supported by (projections of) geodesic simplices. 


\subsection*{Geodesic simplices}
Let $\mathno=\matH^n\cup \bb\matH^n$ be the usual compactification of the hyperbolic space $\matH^n$.
We recall that every pair of points of $\mathno$ is connected by a unique geodesic segment (which 
has infinite length if any of its endpoints lies in $\partial\matH^n$). A subset in $\mathno$ is \emph{convex}
 if whenever it contains a pair of points it also contains the geodesic segment connecting them. The \emph{convex hull} of a set $A$ is defined as usual as the intersection of all convex sets containing $A$.
A \emph{(geodesic) $k$-simplex} $\Delta$ in $\mathno$ is the convex hull of $k+1$ points in $\mathno$, called \emph{vertices}. We say that a $k$-simplex is:
\begin{itemize}
\item \emph{ideal} if all its vertices lie in $\partial\matH^n$,
\item \emph{regular} if every permutation of its vertices is induced by an isometry of $\matH^n$,
\item \emph{degenerate} if it is contained in a $(k-1)$-dimensional subspace of $\mathno$.
\end{itemize}

As above, we denote by $v_n$ the volume of the regular ideal simplex in $\mathno$. The following result
characterizes hyperbolic geodesic simplices of maximal volume, and plays a fundamental
role in the study of the simplicial volume of hyperbolic manifolds:

\begin{thm}[\cite{HM, Pe}] \label{maximal:teo}
Let $\Delta$ be an $n$-simplex in $\mathno$. Then $\vol(\Delta)\leqslant v_n$, with equality if and only if $\Delta$ is ideal and regular.
\end{thm}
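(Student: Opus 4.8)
The plan is to reduce to the case of ideal simplices by a monotonicity argument, and then to handle ideal simplices by a compactness argument followed by a variational one. For the reduction, suppose $\Delta$ has a vertex $v\in\matH^n$; if $\Delta$ is degenerate then $\vol(\Delta)=0<v_n$ and there is nothing to prove, so assume it is non-degenerate. Choosing another vertex $w$ of $\Delta$ and letting $\xi\in\bb\matH^n$ be the endpoint of the geodesic ray issuing from $w$ through $v$, the simplex $\Delta'$ obtained by replacing the vertex $v$ with $\xi$ contains $\Delta$, since $v\in[w,\xi]\subseteq\Delta'$ and every other vertex of $\Delta$ still lies in $\Delta'$. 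As $\Delta\subsetneq\Delta'$, both sets have non-empty interior, and $\Delta$ is closed, the set $\interior{\Delta'}\setminus\Delta$ is non-empty and open, so strict monotonicity of the hyperbolic volume gives $\vol(\Delta)<\vol(\Delta')$. Iterating over the finite vertices, choosing $\xi$ generically at each stage so as to keep the simplex non-degenerate, produces an ideal simplex $\Delta^{\ast}$ with $\vol(\Delta)<\vol(\Delta^{\ast})$. Hence it suffices to prove that every ideal $n$-simplex has volume $\leq v_n$, with equality exactly for the regular one: granting this, the regular ideal simplex realizes $v_n$ by definition, a non-regular ideal simplex has volume $<v_n$ by the ideal case, and any non-ideal simplex has volume $<\vol(\Delta^{\ast})\leq v_n$.

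To establish the existence of a maximizer among ideal simplices, I would identify $\bb\matH^n$ with $S^{n-1}$ and view an ideal $n$-simplex as the convex hull of an ordered $(n+1)$-tuple of pairwise distinct points of $S^{n-1}$, whose volume depends on the tuple only through its orbit under the proper action of $\Isom(\matH^n)$ on configuration space. Normalizing a maximal number of the vertices, $\vol$ becomes a continuous function on an open subset $\mathcal{U}$ of a finite-dimensional manifold. Since the simplex degenerates and its volume tends to $0$ as two vertices collide, $\vol$ extends continuously by $0$ to the compact closure of $\mathcal{U}$, and therefore attains its maximum at some non-degenerate ideal simplex $\Delta_0$.

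The final step is to show that $\Delta_0$ is regular. Being an interior maximum of $\vol$ on $\mathcal{U}$, the simplex $\Delta_0$ is a critical point with respect to each vertex moving along $S^{n-1}$. I would compute this first variation starting from an explicit integral formula for the volume of an ideal simplex — placing one vertex at $\infty$ in the upper half-space model, the simplex lies above a Euclidean $(n-1)$-simplex and $\vol$ is an integral over that simplex involving the heights of the spherical facets — and show that vanishing of the first variation forces each vertex to be positioned symmetrically relative to the opposite facet, a system of conditions which by uniqueness is satisfied only by the regular configuration. It would then follow that $\vol(\Delta_0)=v_n$ by definition of $v_n$, and that every non-regular ideal simplex has strictly smaller volume, since it is not a maximizer.

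I expect the first-variation computation, together with the extraction of regularity from it, to be the main obstacle: this is precisely the technical core of the arguments of Haagerup--Munkholm~\cite{HM} and Peyerimhoff~\cite{Pe}, whose details I would follow for this part.
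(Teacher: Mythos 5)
The paper does not prove Theorem~\ref{maximal:teo}: it is quoted with the citations \cite{HM,Pe} and used as a black box, so there is no in-paper argument for you to match. Your outline --- (i) reduce to the ideal case by pushing each finite vertex out to $\partial\matH^n$ along a ray from another vertex, (ii) produce a maximizer among ideal simplices by a normalize-and-compactify argument, (iii) characterize the maximizer --- is the standard framing, and steps (i) and (ii) are sound modulo the details you yourself flag. In (i) you do not actually need to choose $\xi$ ``generically'': if $\Delta$ is nondegenerate and $\xi$ is the ideal endpoint of the ray from $w$ through $v$, then $\xi,v,w$ are collinear, so $\xi$ cannot lie in the $(n-1)$-plane spanned by the other vertices unless $v$ already did; nondegeneracy is preserved automatically. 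In (ii) the phrase ``compact closure of $\mathcal{U}$'' hides real work: after fixing a vertex at $\infty$ and two more at $0$ and $e_1$, the remaining $n-2$ vertices range over a noncompact space, so you must choose a compactification (e.g.\ inside $(S^{n-1})^{n-2}$) and verify that $\vol\to 0$ uniformly at its boundary strata, not only under pairwise collision.

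The genuine gap is in step (iii). You describe it as a first-variation computation whose vanishing ``forces each vertex to be positioned symmetrically relative to the opposite facet,'' and you propose to defer the details to \cite{HM} and \cite{Pe}. But neither reference argues this way. Haagerup--Munkholm work in the projective (Klein) model, write the volume as $\int_{\Delta}(1-|x|^2)^{-(n+1)/2}\,dx$ over a Euclidean simplex inscribed in the unit ball, reduce to an explicit one-parameter extremal problem and settle it with a closed-form inequality; there is no differentiation with respect to vertex positions. Peyerimhoff instead uses hyperbolic Steiner symmetrization, a global rearrangement procedure that strictly increases volume unless the simplex is already symmetric, with the regular simplex as the unique fixed point --- again not a critical-point analysis. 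So ``follow their details for this part'' would not yield the variational step you describe. To keep your plan you would have to carry out that first variation yourself for $n\geq 4$, identify the resulting Euler--Lagrange conditions, and then prove separately that those conditions are satisfied only by the regular ideal simplex --- none of which is in the cited sources and all of which is the hard content of the theorem. As it stands, the proposal correctly locates where the difficulty lies, but the mechanism it offers for resolving it is not the one it claims to be borrowing, and is not supplied.
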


Let $\Delta$ be a nondegenerate geodesic $n$-simplex, and let $E$ be an $(n-2)$-dimensional face of $\Delta$. The \emph{dihedral angle}
$\alpha (\Delta,E)$ 
of $\Delta$ at $E$ is defined as follows: let $p$ be a point in $E\cap \matH^n$, and let $H\subseteq \matH^n$ 
be the unique $2$-dimensional geodesic plane which intersects $E$ orthogonally in $p$. We set $\alpha(\Delta,E)$ to be equal to the angle in $p$ of the polygon
$\Delta\cap H$ of $H\cong \matH^2$. Observe that this definition is independent of $p$.

From the computation of the dihedral angle
of the regular ideal geodesic $n$-simplex, together with the fact that geodesic simplices
of almost maximal volume are close in shape to regular ideal simplices, one deduces:

\begin{lemma}\label{maximal-angle}
Let $n\geqslant 4$. Then, there exists
$\vare_n>0$, depending only on $n$, such that the 
following condition holds:
if $\Delta\subseteq \mathno$ is a geodesic $n$-simplex such that
$\vol(\Delta)\geqslant (1-\vare_n)v_n$ and
$\alpha$ is the dihedral angle of $\Delta$
at any of its $(n-2)$-faces, then
$$
2< \frac{\pi}{\alpha} < 3\ .
$$
\end{lemma}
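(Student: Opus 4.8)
\textbf{Proof plan for Lemma~\ref{maximal-angle}.}

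The plan is to compute the dihedral angle $\alpha_n$ of the regular \emph{ideal} geodesic $n$-simplex exactly, verify that it satisfies $2 < \pi/\alpha_n < 3$ strictly for every $n \geq 4$, and then invoke a continuity/compactness argument to transfer this strict inequality to all simplices of almost maximal volume. The first step is the substantive computation: writing the regular ideal $n$-simplex with vertices at the $n+1$ ``corners'' of an inscribed Euclidean regular simplex on the boundary sphere, one can compute the dihedral angle at an $(n-2)$-face by slicing with the orthogonal $2$-plane as in the definition of $\alpha(\Delta,E)$. A clean way to do this is to pass to the upper half-space model, send one vertex to $\infty$, and observe that the link of that vertex is a Euclidean regular $(n-1)$-simplex; the dihedral angles of the ideal hyperbolic simplex at the $(n-2)$-faces adjacent to that vertex equal the dihedral angles of this Euclidean $(n-1)$-simplex, which are $\arccos\bigl(\tfrac{1}{n-1}\bigr)$. (By symmetry and regularity, all $(n-2)$-faces give the same angle, so $\alpha_n = \arccos\bigl(\tfrac{1}{n-1}\bigr)$.)

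Next I would check the numerical claim for $\alpha_n = \arccos\bigl(\tfrac{1}{n-1}\bigr)$. For $n = 4$ we get $\alpha_4 = \arccos(1/3) \approx 70.5^\circ$, so $\pi/\alpha_4 = 180/70.5 \approx 2.55 \in (2,3)$. As $n \to \infty$, $\alpha_n \to \arccos(0) = \pi/2$, so $\pi/\alpha_n \to 2$ from above; and $\alpha_n$ is monotonically increasing in $n$, so $\pi/\alpha_n$ is monotonically decreasing toward $2$ and stays strictly above $2$. One also checks $\pi/\alpha_n < 3$ for all $n \geq 4$, with the tightest case being $n=4$ where it equals $\approx 2.55 < 3$. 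Thus the strict inequalities $2 < \pi/\alpha_n < 3$ hold for every $n \geq 4$ with a definite gap from the endpoints, the worst gap being the one near $2$ when $n$ is large, which is still strictly positive for each fixed $n$.

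The remaining step is the perturbation argument. The space of (isometry classes of) geodesic $n$-simplices $\Delta \subseteq \mathno$ with $\vol(\Delta) \geq v_n/2$, say, is compact, and on it both $\vol$ and the dihedral angle function at any given $(n-2)$-face are continuous (the angle function extends continuously even to the locus where some vertices are ideal, provided the simplex stays nondegenerate, and $\vol(\Delta) \geq v_n/2$ forces nondegeneracy). By Theorem~\ref{maximal:teo}, $\vol(\Delta) = v_n$ exactly when $\Delta$ is ideal and regular, in which case every $(n-2)$-dihedral angle equals $\alpha_n$. Hence, by continuity, for every $\delta > 0$ there is $\vare_n > 0$ so that $\vol(\Delta) \geq (1-\vare_n) v_n$ implies that each dihedral angle of $\Delta$ at an $(n-2)$-face lies within $\delta$ of $\alpha_n$. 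Choosing $\delta$ small enough that the $\delta$-neighborhood of $\alpha_n$ still maps into the open interval $\{\alpha : 2 < \pi/\alpha < 3\}$ — possible by the strict inequalities just established — yields the lemma. The main obstacle is really just the first computation: identifying $\alpha_n = \arccos\bigl(\tfrac{1}{n-1}\bigr)$ cleanly (the link-of-a-vertex trick makes this routine), after which the numerics and the compactness argument are entirely standard. One subtlety worth being careful about: the continuity of the dihedral-angle function as vertices approach the boundary, and the fact that ``almost maximal volume'' really does force the shape to be close to the regular ideal one — both of these are part of the folklore of Thurston's straightening argument and are precisely what the phrase ``geodesic simplices of almost maximal volume are close in shape to regular ideal simplices'' in the paragraph preceding the lemma is alluding to.
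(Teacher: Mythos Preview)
Your proposal is correct and follows essentially the same route the paper indicates: the paper does not give its own proof but defers to \cite[Lemma~2.16]{FFM}, after describing the argument in the sentence preceding the lemma as ``the computation of the dihedral angle of the regular ideal geodesic $n$-simplex, together with the fact that geodesic simplices of almost maximal volume are close in shape to regular ideal simplices.'' Your computation $\alpha_n=\arccos\bigl(\tfrac{1}{n-1}\bigr)$ via the horospherical link, the numerical check (with the borderline case $n=3$ giving $\pi/\alpha_3=3$ explaining the hypothesis $n\geq 4$), and the compactness/continuity perturbation step are exactly the ingredients that go into the cited proof.
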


We refer the reader
to~\cite[Lemma 2.16]{FFM} for a proof.

\subsection*{Geometric straightening}\label{straight:sub}
Let us come back to the definition of straightening
for simplices in hyperbolic manifolds. 
Henceforth we denote by $M$ 
a  hyperbolic manifold with geodesic boundary. As usual, we also assume that $M$
is oriented.

The universal covering $\wdtM$ of $M$ is a convex subset of $\matH^n$ bounded 
by a countable family of disjoint geodesic hyperplanes (see \emph{e.g.}~\cite{Kojima1}).
If $\sigma\colon\Delta^k\to \wdtM$ is a singular $k$-simplex,
then we may define the simplex $\widetilde{\str}_k(\sigma)$ as follows: 
set $\widetilde{\str}_k(\sigma)(v) = \sigma(v)$ on every vertex $v$ of $\Delta^k$, and
extend using barycentric coordinates (see \cite[Chapter 11]{Ratcliffe}) or by an inductive cone construction
(which exploits the fact that any pair of points in $\wdtM$ is joined by a unique
geodesic, that continuously depends on its endpoints -- see \emph{e.g.}~\cite[Section 3.1]{FP} for full details). The image of $\widetilde{\str}_k(\sigma)$ is the geodesic simplex
spanned by the vertices of $\sigma$. Hence, we define a map $\widetilde{\str}_*:
C_*(\wdtM,\bb\wdtM;R)\rightarrow C_*(\wdtM,\bb\wdtM;R)$. Being $\pi_1(M)$-invariant, the map  $\widetilde{\str}_*$ induces a map
$$
\str_*\colon C_*(M,\bb M;R)\to C_*(M,\bb M;R)
$$
which is homotopic to the identity.
Simplices that lie in the image of $\str_*$ are called \emph{straight}. 



Recall that, by Lemma~\ref{integral:estimate:lemma}, the simplicial volume 
of a connected oriented $n$-manifold can be computed from \emph{integral} cycles.
Using the straightening procedure we show that such cycles
may be represented by $n$-pseudomanifolds with additional properties.

\begin{prop}\label{cyclehyp:prop}
Suppose that $M$ is a hyperbolic $n$-manifold with geodesic boundary $\bb M$.
Let $\vare>0$ be fixed. Then, there exists a relative integral cycle $z\in C_n (M,\partial M;\Z)$ 
with associated pseudomanifold $P$ such that the following conditions hold:
\begin{enumerate}
\item
$[z]=d\cdot [M,\bb M]_\Z$ in $H_n(M,\partial M;\Z)$ for some integer $d> 0$, and
$$ \frac{\|z\|_\Z}{d}\leq \| M,\bb M\| +\vare\ ;$$
\item
every singular simplex appearing in $z$ is straight;
\item
every simplex of $P$ has at most one $(n-1)$-dimensional boundary face;
\item
if $n=3$, then 
every simplex of 
$P$ without $2$-dimensional boundary  faces has at most
two edges contained in $\bb |P|$ and every simplex of $P$ has at most three edges in $\bb |P|$.
\end{enumerate}
\end{prop}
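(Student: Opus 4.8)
The plan is to start from an arbitrary efficient integral cycle provided by Lemma~\ref{integral:estimate:lemma}, straighten it to obtain property~(2), and then perform a sequence of combinatorial modifications that establish (3) and (4) without increasing the norm by more than $\vare$ and without destroying the property of representing a positive multiple of $[M,\bb M]_\Z$. Concretely, given $\vare>0$, choose an integral cycle $z_0=\sum_i\vare_i\sigma_i$ with $[z_0]=d_0[M,\bb M]_\Z$ and $\|z_0\|_\Z/d_0\le\|M,\bb M\|+\vare/2$. Applying $\str_*$, which is chain-homotopic to the identity, yields a cycle $z$ homologous to $z_0$ with the same (or smaller) norm and all simplices straight, so (1) and (2) hold. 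The point of straightening is not merely cosmetic: once a simplex is straight, its restriction to each face is again straight and hence determined by the images of the vertices, so whether two faces form a canceling pair becomes a purely vertex-combinatorial condition, and the associated pseudomanifold $P$ becomes a genuinely geometric object whose boundary faces are (projections of) geodesic $(n-1)$-simplices sitting inside $\bb M$.

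Next I would address property~(3): no simplex of $P$ should have two or more boundary faces. The idea is a local surgery. Suppose a simplex $\Delta^n_i$ of $P$ has two boundary faces $F^i_j$ and $F^i_{j'}$. These correspond to two singular $(n-1)$-simplices in $\partial z$ lying in $\bb M$. Both of these faces are straight geodesic $(n-1)$-simplices in a boundary component of $M$, which is itself a hyperbolic $(n-1)$-manifold (without boundary). I would ``double'' or ``split'' $\Delta^n_i$ by inserting an auxiliary simplex — concretely, subdivide $\Delta^n_i$ by coning one of the boundary faces to an interior point, or better, replace $\Delta^n_i$ by two straight simplices glued along a new internal face so that each of the two new top-dimensional simplices carries at most one of the original boundary faces. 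One must check that this operation preserves the cycle condition and the homology class, and that its cost in $L^1$-norm can be controlled: the natural move is to work with the boundary cycle $\partial z$ in $\bb M$, note that $\partial z/d$ is close to a fundamental cycle of $\partial M$, and replace offending simplices by geometric subdivisions whose total norm contribution stays bounded. The cleanest packaging is to argue that one may take $z$ to already be ``in general position'' so that each simplex meets $\bb M$ in at most one codimension-one face, perhaps by a transversality/perturbation argument on the straight simplices, since generic geodesic $n$-simplices touch the disjoint geodesic boundary hyperplanes of $\wdtM$ in at most one face.

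For property~(4), which is specific to $n=3$, the control is finer: besides the faces, one must bound the number of \emph{edges} lying in $\bb|P|$. Here the dihedral-angle estimate (Lemma~\ref{maximal-angle}) and the maximality theorem (Theorem~\ref{maximal:teo}) should enter, together with the efficiency of the cycle. The mechanism I expect is: if too many edges of the triangulation lie on the boundary, then the simplices clustering around such edges must be ``thin'' in a way that forces either a large number of simplices (contradicting near-minimality of the norm) or simplices that can be amalgamated/removed by another local move. So the strategy for (4) is again a finite list of combinatorial reductions — pushing an edge off the boundary by a cone move, or merging two simplices sharing a boundary edge — each applied until no simplex without a boundary face has more than two boundary edges and no simplex has more than three; and one must verify the process terminates (e.g.\ some complexity, like number of boundary edges counted with multiplicity, strictly decreases) while the norm increases by at most $\vare/2$ in total.

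The main obstacle, I expect, is controlling the norm throughout the combinatorial modifications: each local move (subdividing a simplex, coning a face, merging two simplices) changes the number of top-dimensional simplices, and one must show that, by choosing the moves carefully and applying them to a cycle that is already nearly minimal, the cumulative increase in $\|z\|_\Z/d$ can be kept below $\vare$. A secondary subtlety is ensuring that after the moves the chain is still an \emph{integral} relative cycle representing a \emph{positive} multiple of $[M,\bb M]_\Z$ — the sign/orientation bookkeeping in the canceling-pair definition must be respected so that the associated pseudomanifold remains orientable and the map $f\colon|P|\to M$ still pushes $z_P$ to the modified $z$. I would handle both by formulating each elementary move as: ``replace a single straight simplex, or a small configuration of straight simplices, by a geometric subdivision,'' for which the homology class is manifestly unchanged and the norm cost is a fixed small number of simplices times a small weight, and then iterate finitely many times, tracking a monotone combinatorial complexity to guarantee termination before the norm budget $\vare$ is exhausted.
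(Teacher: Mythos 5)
Your proposal diverges from the paper at the crucial point: you treat properties (3) and (4) as something to be \emph{achieved} by a sequence of combinatorial surgeries (subdividing, coning, merging, with a norm budget of $\vare$), whereas in fact they are \emph{automatic} consequences of straightening and the geometry of the universal cover. No surgery, perturbation, transversality, or norm bookkeeping is needed beyond the single straightening step. The key observation, which you gesture at but then abandon in favor of the surgery scheme, is this: lift a straight simplex $\sigma$ appearing in $z$ to $\tilde\sigma\colon\Delta^n\to\widetilde M\subseteq\matH^n$. The boundary $\partial\widetilde M$ is a disjoint union of geodesic hyperplanes, and $\tilde\sigma$ is the convex hull of its vertices; hence a $k$-face of $\tilde\sigma$ lies in $\partial\widetilde M$ exactly when its $k+1$ vertices lie on a single component of $\partial\widetilde M$. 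If $\sigma$ had two $(n-1)$-faces on $\partial M$, those two faces share $n-1$ vertices, forcing all $n+1$ vertices onto one boundary hyperplane and hence $\sigma$ itself into $\partial M$, contradicting the standing reduction that simplices supported in $\partial M$ have been discarded from the relative cycle. This gives (3) directly for every straight relative cycle, generic or not. The same reasoning gives (4) for $n=3$: three edges of a tetrahedron always form a connected subgraph touching at least three vertices, so three boundary edges force a $2$-face into $\partial M$; four boundary edges touch all four vertices and force the whole tetrahedron into $\partial M$.

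Concretely, the gap is twofold. First, your surgery-based plan for (3) is not carried out and would be delicate: geometric subdivisions of straight simplices need not remain straight, and it is not clear how to bound the norm increase by $\vare$ uniformly across the finitely-but-unboundedly many offending simplices, nor how to guarantee termination of the reduction for (4). Second, and more fundamentally, invoking Theorem~\ref{maximal:teo} and Lemma~\ref{maximal-angle} for part (4) is off target — those results govern the \emph{size} of dihedral angles of near-maximal simplices and are used later in the paper (in the proof of Theorem~\ref{hyp:high:thm}), not here. Properties (3) and (4) are purely incidence constraints forced by the disjointness and total geodesy of the boundary walls of $\widetilde M$, and they cost nothing.
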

\begin{proof}
By Lemma \ref{integral:estimate:lemma} we may choose an integral cycle $z'$ satisfying condition (1), and set $z= \str_n(z')\in C_n(M,\bb M;\mathbb{Z})$. 
As usual,
we understand that no simplex appearing in $z$ is supported in $\bb M$ (otherwise,
we may just remove it from $z$ without modifying 
the class of $z$ in $C_n(M,\bb M;\mathbb{Z})$ and decreasing
$\|z\|_\mathbb{Z}$).
Point~(1) descends from the fact that
the straightening operator is norm nonincreasing and homotopic to the identity, while point (2) is obvious.

Let $\sigma$ be a straight $n$-simplex of $z$.
Let $\widetilde{\sigma}$ be a fixed lift of $\sigma$ to $\wdtM$. If there exists a component of $\bb\wdtM$ containing
 $m+1$ vertices of $\widetilde{\sigma}$, then  $\widetilde{\sigma}$ has an $m$-dimensional face supported on $\bb\wdtM$.
 In particular, the assumption that $\sigma$ is not supported on $\bb M$ implies that no  component of $\bb\widetilde{M}$ contains all the vertices of $\widetilde{\sigma}$. Hence,
\begin{enumerate}[(i)]
 \item If $\sigma$ has two faces on $\bb M$, then the vertices of $\widetilde{\sigma}$ 
are contained in the same connected component of $\bb \widetilde{M}$, a contradiction. 
\item  Suppose  that $\sigma$ has at least three edges on $\bb M$. If $n=3$, the 
union of the corresponding edges of $\widetilde{\sigma}$ is connected, so
at least three vertices
of $\widetilde{\sigma}$ lie on the same connected component of $\bb\wdtM$, and 
at least one $2$-face of $\sigma$ is 
supported on $\bb M$.

 If four edges of $\sigma$ lie on $\bb M$ and $n=3$, then as before, the union of the corresponding edges of $\widetilde{\sigma}$ is connected. 
But the vertices of these four edges of the $3$-simplex are all the vertices of the $3$-simplex, which all lie on the same connected component of $\bb\wdtM$, 
a contradiction.
\end{enumerate}
Now points (3) and (4) immediately descend from (i) and (ii).
\end{proof}

\subsection*{Volume form}
Let $\sigma\colon \Delta^n\to M$ be a smooth $n$-simplex,
and let $\omega$ be the volume form of $M$. We set
$$
\vola (\sigma)=\int_{\Delta^n} \sigma^*(\omega)\ .
$$
Since straight simplices are smooth, the map 
$$
C_n(M,\bb M;\R)\to \R,\qquad \sum_{i=1}^n a_i\sigma_i\mapsto \sum_{i=1}^n a_i\vola (\str_n(\sigma_i))
$$
is well-defined. This map
is a relative cocycle that represents the volume coclass on $M$ (see \emph{e.g.}~\cite[Section 4]{FP} for the details).
Therefore, if $z=\sum_{i=1}^h a_i\sigma_i\in C_n(M,\bb M;\Z)$ is an integral cycle
supported by straight simplices such that $[z]=d\cdot [M,\bb M]_\Z$, then
\begin{equation}\label{vol:eq}
\sum_{i=1}^h a_i\vola(\sigma_i)=d\cdot \vol(M)\ .
\end{equation}
Let us rewrite $z$ as follows:
$$
z=\sum_{i=1}^N \vare_i \sigma_i\ ,
$$
where $\vare_i=\pm 1$ for every $i=1,\ldots,N$. Note that we do not assume that $\sigma_i\neq \sigma_j$ for $i\neq j$. 
Let 
$P$ be the pseudomanifold associated to $z$, and recall that the simplices
$\Delta^n_1,\ldots,\Delta^n_N$ of $P$ are in bijection with the
$\sigma_i$'s. An identification of $\Delta^n_{i}$ with the standard $n$-simplex
is fixed for every $i=1,\ldots,N$, so that we may consider $\sigma_{i}$ as a map defined
on $\Delta^n_{i}$. We set
\begin{equation}\label{equ: algebraic volume}
\vola(\Delta^n_i)=\vare_i\vola(\sigma_i)\ ,
\end{equation}
and we say that $\Delta^n_i$ is \emph{positive} (resp.~\emph{degenerate}, \emph{negative})
if $\vola(\Delta^n_i)>0$ (resp.~$\vola(\Delta^n_i)=0$, $\vola(\Delta^n_i)<0$).
Equation~\eqref{vol:eq} may now be rewritten as follows:
\begin{equation}\label{vol2:eq}
\sum_{i=1}^N \vola(\Delta^n_i)=d\cdot \vol(M)\ .
\end{equation}
If $\widetilde{\sigma}_i$ is any lift of $\sigma_i$ to $\wdtM\subseteq \matH^n$,
then $\Delta^n_i$ is degenerate if and only if the image of $\widetilde{\sigma}_i$
is. Since $|\vola(\Delta^n_i)|$ is just the volume of the image of $\widetilde{\sigma}_i$, 
by Theorem~\ref{maximal:teo} we have
$$
|\vola(\Delta^n_i)|\leq v_n\ .
$$
If $\Delta^n_i$ is nondegenerate and $F$ is an $(n-2)$-face of
$\Delta^n_i$, then we define the angle of $\Delta^n_i$ at $F$ as the angle
of the image of $\widetilde{\sigma}_i$ at $\widetilde{\sigma}_i(F)$.

\begin{lemma}\label{or}
Let $F$ be an $(n-2)$-face of $\bb P$, and let $\Delta^n_{i_1},\ldots,\Delta^n_{i_k}$
be the simplices of $P$ that contain $F$ (taken with multiplicities).
For every $j=1,\ldots,k$ we also suppose that $\vola(\Delta^n_{i_j})>0$, so in particular
$\Delta^n_{i_j}$ is nondegenerate, and has a well-defined angle $\alpha_{i_j}$ at $F$. Then
$$
\sum_{j=1}^k \alpha_{i_j} =\pi\ .
$$
\end{lemma}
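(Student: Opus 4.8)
The statement of Lemma~\ref{or} says that the dihedral angles at a boundary $(n-2)$-face $F$ of the pseudomanifold $P$, coming from all simplices sharing $F$, sum to $\pi$ — provided all those simplices are positive. The key point is that the link of $F$ in $|P|$ is a $1$-dimensional pseudomanifold, and since $F$ lies on the boundary $\partial|P|$, this link is an arc rather than a circle. Let me think through how to prove this.

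First, I would analyze the combinatorics of the simplices around $F$. The simplices $\Delta^n_{i_1},\ldots,\Delta^n_{i_k}$ containing $F$ are glued to one another along $(n-1)$-faces that themselves contain $F$. Each such $n$-simplex has exactly two $(n-1)$-faces containing $F$ (the two faces "opposite" to the two vertices not in $F$), and each identification in $P$ pairs up such faces. Since $F$ is a boundary face of $\partial P$, it lies in a boundary $(n-1)$-face. The "link" of $F$ — the graph whose vertices are the $\Delta^n_{i_j}$ and whose edges record gluings of $(n-1)$-faces through $F$ — is a disjoint union of arcs and circles, but because $F\in\partial|P|$ at least one $(n-1)$-face through $F$ is a boundary face, so the component containing that face is an arc. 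In fact I would argue that the relevant component is a single arc with the simplices $\Delta^n_{i_1},\ldots,\Delta^n_{i_k}$ appearing consecutively along it, with the two extreme $(n-1)$-faces being boundary faces of $\partial|P|$ (and these two boundary $(n-1)$-faces together form, in $\partial P$, the boundary neighborhood of $F$). One should also rule out that $F$ meets two distinct arc-components; this follows because $\partial P$ is itself an $(n-1)$-pseudomanifold without boundary, so $F$, regarded as an $(n-2)$-simplex of $\partial P$, lies in exactly two $(n-1)$-simplices of $\partial P$, forcing a single arc.

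Next comes the geometric input. Fix a point $p$ in the interior of $F$ and, following the definition of dihedral angle, intersect each $\Delta^n_{i_j}$ (or rather its straight image upstairs in $\wdtM$) with the $2$-plane $H$ through $p$ orthogonal to $F$. Because $\wdtM$ is convex and the simplices are straight (geodesic), and because consecutive simplices along the arc are glued along an $(n-1)$-face meeting $F$ — which intersects $H$ in a geodesic ray from $p$ — the wedges $\Delta^n_{i_j}\cap H$ fit together consecutively around $p$ inside $H\cong\matH^2$, sharing boundary rays. Here the positivity hypothesis $\vola(\Delta^n_{i_j})>0$ is essential: it guarantees each simplex is nondegenerate (so the angle $\alpha_{i_j}$ is well-defined and positive) and, more importantly, that all simplices sit on the \emph{same side} locally, i.e. the orientations are coherent so that the wedges do not overlap or cancel. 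The two extreme rays of the resulting fan are the two boundary $(n-1)$-faces, which locally near $p$ bound $\wdtM$; since $\partial\wdtM$ is totally geodesic and $p$ lies in its relative interior within $F$, these two extreme rays are actually the two opposite rays of a single geodesic line through $p$ lying in $\partial\wdtM\cap H$. Therefore the fan of wedges fills exactly a half-plane, so $\sum_{j=1}^k\alpha_{i_j}=\pi$.

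The main obstacle, I expect, is making the combinatorial "arc" picture precise and rigorous — in particular verifying that the simplices around $F$ really do line up in a single consecutive chain with boundary faces at the two ends, and that the positivity assumption genuinely prevents the wedges from wrapping around or folding back (which is where a degenerate or negatively-oriented simplex could spoil the count). Once that local model is established, the passage to the $2$-plane $H$ and the conclusion $\sum\alpha_{i_j}=\pi$ is essentially the standard fact that the angles of a totally geodesic hyperplane's normal fan at an interior point of a codimension-$2$ boundary stratum add up to a straight angle. I would phrase the combinatorial step using that $\partial P$ is an $(n-1)$-pseudomanifold without boundary (as recalled after Lemma~\ref{basic:gluing:lemma}), which pins down that $F$ lies in exactly two boundary $(n-1)$-faces and hence the chain of positive simplices through $F$ is a single arc joining them.
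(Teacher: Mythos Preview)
Your approach is correct and essentially the same as the paper's: develop the chain of simplices around $F$ into $\wdtM\subseteq\matH^n$, observe that the two extreme boundary $(n-1)$-faces land in a single geodesic hyperplane of $\partial\wdtM$, and use positivity of the $\vola(\Delta^n_{i_j})$ to ensure the wedges add up coherently rather than cancel. The paper phrases the last step slightly differently---it notes that the developed configuration forces an \emph{algebraic} sum of the $\alpha_{i_j}$ to equal $0$ or $\pi$, and then checks via an explicit sign computation with the canceling-pair condition $\vare_{i_1}\vare'_{i_1}+\vare_{i_2}\vare'_{i_2}=0$ that positivity makes every sign $+$, hence the sum is $\pi$---whereas you spend more effort on the combinatorial link-is-an-arc picture; but the underlying argument is the same.
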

\begin{proof}
Up to choosing suitable lifts $\widetilde{\sigma}_{i_j}$
of the $\sigma_{i_j}$'s, we may glue 
the $\widetilde{\sigma}_{i_j}$'s in order to develop the union
of the $\Delta^n_{i_j}$'s
into $\wdtM\subseteq \matH^n$. Since the $(n-1)$-faces of $\bb P$
sharing $F$ are developed into two adjacent $(n-1)$-geodesic simplices
in $\bb\wdtM$, this implies at once that
a suitable algebraic sum of the $\alpha_{i_j}$'s is equal either to $0$ or to $\pi$.
In order to conclude it is sufficient to show that the condition
$\vola(\Delta^n_{i_j})>0$ implies that all the signs in this algebraic sum are positive
(this implies in particular that the sum is itself positive, whence equal to $\pi$).

To prove the last statement,
it is sufficient to check that, if
$\Delta^n_{i_{j_1}}$, $\Delta^n_{i_{j_2}}$ 
are adjacent in $P$ along
their common $(n-1)$-face $V$ and the lifts $\widetilde{\sigma}_{i_{j_1}}$, $\widetilde{\sigma}_{i_{j_2}}$ coincide on $V$, then
the images of $\widetilde{\sigma}_{i_{j_1}}$ and $\widetilde{\sigma}_{i_{j_2}}$
lie on different sides of $\widetilde{\sigma}_{i_{j_1}}(V)=
\widetilde{\sigma}_{i_{j_2}}(V)$. Let us set for simplicity $j_1=1$ and $j_2=2$,
and for $j=1,2$ let $\vare'_{i_j}=1$ if $V$ is the $k$-th face of
$\Delta^n_{i_j}$ and $k$ is even, and $\vare'_{i_j}=-1$ otherwise.
It is easily checked that the images of $\widetilde{\sigma}_{i_1}$ and $\widetilde{\sigma}_{i_2}$ lie on different sides of $\widetilde{\sigma}_{i_1}(V)=
\widetilde{\sigma}_{i_1}(V)$ if and only if
the quantities
$$
\vare'_{i_1}\vola(\sigma_{i_1})\, , \qquad
\vare'_{i_2}\vola(\sigma_{i_2})
$$
have opposite sign.
However, since $V$ corresponds to a canceling pair, we have
$\vare_{i_1}\vare_{i_1}'+\vare_{i_2}\vare_{i_2}'=0$, so the conclusion follows
from the positivity of $\vola(\Delta^n_{i_1})$ and $\vola(\Delta^n_{i_2})$.
\end{proof}

\section{Proof of Theorem~\ref{hyp:high:thm}}\label{proof-main:sec}
Throughout this section we suppose that $\dim M=n\geq 4$. The idea of the proof is as follows: Lemma~\ref{maximal-angle} implies that
no dihedral angle of a geodesic $n$-simplex of almost maximal volume can be a submultiple of
$\pi$. Together with Lemma~\ref{or}, this implies that any fundamental cycle $M$ must contain simplices whose support has small volume (that is, smaller than $(1-\varepsilon_n)v_n$). 
In fact, the weights of these simplices in any fundamental cycle may be bounded from below 
by the simplicial volume of the boundary of $M$, and this will finally yield the estimate needed in Theorem~\ref{hyp:high:thm}. Let us now provide the detailed computations.

Let $I=\{1,\ldots,N\}$ and let
$$
z=\sum_{i\in I} \vare_i \sigma_i 
$$
be an integral $n$-cycle satisfying the conditions of Proposition~\ref{cyclehyp:prop},
where $\vare_i=\pm 1$ for every $i\in I$. Let $P$ be a pseudomanifold associated to $z$, and let $\Delta^n_i$, $\vola(\Delta^n_i)$ 
be defined as in the previous section.

We choose $\vare_n$ as in Lemma~\ref{maximal-angle}, we say that the simplex $\Delta_i^n$ is \emph{small}
if and only if $\vola(\Delta_i^n)\leq (1-\vare_n)v_n$, and we
set 
$$I_{\mathrm{small}}=\{i\in I\, |\, \Delta_i^n\ \textrm{is\ small}\},\qquad
N_{\mathrm{small}}=\# I_{\mathrm{small}}
\ .$$

\begin{lemma}\label{boundN}
 We have
$$
N_{\mathrm{small}}\geq\frac{d}{n+1}\|\bb M\|\ .
$$
\end{lemma}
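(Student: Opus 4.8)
The plan is to count, with suitable multiplicities, the $(n-1)$-dimensional boundary faces of the pseudomanifold $P$ from two sides. On one hand, the boundary cycle $\partial z$ represents $d\cdot[\partial M]_\Z$ (up to the simplices supported on $\partial M$, which we have already discarded), so by Proposition~\ref{degree:prop} applied componentwise — or more simply by equation~\eqref{eq:boundary} together with $\|\partial M\|_\Z\geq\|\partial M\|$ — the number of $(n-1)$-simplices appearing in $\partial z$ is at least $d\cdot\|\partial M\|$. Since by condition~(3) of Proposition~\ref{cyclehyp:prop} every simplex $\Delta^n_i$ of $P$ contributes at most one such boundary face, this already shows that the total number of simplices of $P$ having an $(n-1)$-dimensional boundary face is at least $d\cdot\|\partial M\|$; call this set $I_{\partial}$, so $\#I_\partial\geq d\|\partial M\|$. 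Actually we want a bound on $N_{\mathrm{small}}$, not on $\#I_\partial$, so the real content is to show that each boundary face of $P$ forces \emph{some} small simplex nearby, and that no small simplex is blamed too many times.

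The key step is the following dichotomy for the simplices in $I_\partial$: fix $i\in I_\partial$ and let $F$ be an $(n-2)$-face of the (unique) boundary $(n-1)$-face of $\Delta^n_i$, so $F$ is an $(n-2)$-face of $\partial P$. Let $\Delta^n_{i_1},\dots,\Delta^n_{i_k}$ be the simplices of $P$ containing $F$, counted with multiplicity. If \emph{all} of them were non-small and, say, positive (the degenerate and negative cases are handled separately — degenerate simplices are automatically small since $\vol=0<(1-\vare_n)v_n$, and a negative simplex can be made positive by reversing its orientation, which does not affect its volume or its angles), then Lemma~\ref{or} would give $\sum_{j=1}^k\alpha_{i_j}=\pi$, while Lemma~\ref{maximal-angle} forces each $\alpha_{i_j}\in(\pi/3,\pi/2)$, which is impossible: a sum of $k$ numbers each strictly between $\pi/3$ and $\pi/2$ cannot equal $\pi$ (for $k=2$ the sum lies in $(2\pi/3,\pi)$; for $k\geq 3$ it exceeds $\pi$; for $k=1$ it is less than $\pi/2$). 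Hence \emph{at least one} of the simplices $\Delta^n_{i_j}$ around $F$ must be small. This is exactly the mechanism advertised in the introduction to the section: an angle that is not a submultiple of $\pi$ obstructs exact tiling around a boundary ridge, so a defect simplex of small volume must appear.

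It remains to turn this local statement into the global count $N_{\mathrm{small}}\geq \frac{d}{n+1}\|\partial M\|$. I would set up an incidence bound: to each of the (at least) $d\|\partial M\|$ boundary $(n-1)$-simplices of $P$, attach one of its $(n-2)$-faces $F$ and then one small simplex $\Delta^n_{i(F)}$ adjacent to $F$ as produced above; conversely, a given small simplex $\Delta^n_i$ has exactly $n+1$ codimension-one faces and can therefore be adjacent to at most $n+1$ boundary $(n-1)$-faces in this assignment — more precisely, each boundary $(n-1)$-face we started from lies in a codimension-one face of its chosen small neighbour, and a fixed $n$-simplex has only $n+1$ codimension-one faces, so at most $n+1$ of our starting boundary faces get routed to the same small simplex. (One must be mildly careful that a single small simplex might be the chosen neighbour through two different of its own faces; bounding the multiplicity by $n+1$, the number of facets, is the safe, clean estimate and is what the statement asks for.) Dividing the lower bound $d\|\partial M\|$ on the number of boundary $(n-1)$-faces by the per-small-simplex multiplicity bound $n+1$ yields $N_{\mathrm{small}}\geq \frac{d}{n+1}\|\partial M\|$.

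The main obstacle, and the place where the argument needs to be written carefully, is the bookkeeping in this last paragraph: making the assignment ``boundary face $\mapsto$ small simplex'' well defined, and proving rigorously that the fibers of this assignment have size at most $n+1$. The two inputs that make it work — that $\partial z$ carries $d[\partial M]$ so there are many boundary faces, and that each small simplex has only $n+1$ facets — are both elementary; the care lies in not over- or under-counting faces shared between simplices of $P$ and in correctly reducing the negative and degenerate cases to the positive case of Lemma~\ref{or}. The angle arithmetic itself (that $\pi$ is not a sum of numbers each in $(\pi/3,\pi/2)$) is immediate once Lemma~\ref{maximal-angle} is invoked, since $2<\pi/\alpha<3$ is precisely $\pi/3<\alpha<\pi/2$.
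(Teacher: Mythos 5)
Your local step is correct and is exactly the mechanism the paper uses: at any $(n-2)$-face $F$ of $\bb|P|$, Lemma~\ref{or} together with Lemma~\ref{maximal-angle} forces at least one of the simplices of $P$ around $F$ to be small. (The nonpositive and degenerate cases need no special treatment beyond observing that $\vola\leq 0<(1-\vare_n)v_n$ already makes such a simplex small by the definition chosen in Section~\ref{proof-main:sec}; your proposed ``reverse the orientation'' fix for negative simplices is not a legal operation on the cycle $z$, and is also unnecessary.) The genuine gap is in the global count. Your assignment sends a boundary $(n-1)$-simplex $T$ to a chosen $(n-2)$-face $F\subset T$ and then to a small $n$-simplex $\Delta^n_{i(F)}$ containing $F$, and you bound the fiber of this map by $n+1$ on the grounds that ``each boundary $(n-1)$-face we started from lies in a codimension-one face of its chosen small neighbour.'' That is false: $\Delta^n_{i(F)}$ is required only to contain $F$, a codimension-\emph{two} face; the boundary facet $T$ need not be a facet of $\Delta^n_{i(F)}$ at all, since the small simplex produced by the dichotomy is typically some other member of the fan of $n$-simplices around $F$. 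The honest worst-case bound your assignment gives is $2\binom{n+1}{2}=n(n+1)$ (a fixed small simplex has $\binom{n+1}{2}$ codimension-two faces, and each boundary $(n-2)$-face is shared by two boundary $(n-1)$-simplices), which would only yield $N_{\mathrm{small}}\geq \frac{d}{n(n+1)}\|\bb M\|$, a weaker estimate.

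The paper avoids this by not choosing a single $(n-2)$-face per boundary facet; it double-counts the $(n-2)$-faces of $\bb|P|$ directly. Since $\bb P$ is a closed $(n-1)$-pseudomanifold, each $(n-2)$-face of $\bb|P|$ lies in exactly two $(n-1)$-simplices of $\bb P$, so the number of $(n-2)$-faces of $\bb|P|$ equals $\tfrac{n}{2}\,c(\bb P)$, where $c(\bb P)$ is the number of $(n-1)$-simplices of $\bb P$. On the other side, the local dichotomy shows every $(n-2)$-face of $\bb|P|$ lies in at least one small $n$-simplex, and a single $n$-simplex has only $\binom{n+1}{2}$ codimension-two faces, so the number of $(n-2)$-faces of $\bb|P|$ is at most $N_{\mathrm{small}}\binom{n+1}{2}$. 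Combining the two gives $c(\bb P)\leq (n+1)\,N_{\mathrm{small}}$, and $c(\bb P)=\|\bb z\|_\Z\geq d\,\|\bb M\|$ (inequality~\eqref{eq:boundary}) finishes the proof. So your strategy and the key angle argument are right, but the final multiplicity bound should be replaced by this double count of boundary ridges rather than an assignment from boundary facets to small simplices.
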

\begin{proof}
We start by showing that every $(n-2)$-face of $\bb |P|$
is contained in at least one small $n$-simplex $\Delta^n_i$ of $P$, with  $i\in I_{\mathrm{small}}$,  corresponding to some $\sigma_i$. Indeed, let $F$ be an $(n-2)$-face of $\bb |P|$ and let $\Delta^n_{i_1},\ldots,\Delta^n_{i_k}$
be the $n$-simplices of $P$ containing $F$. Suppose
by contradiction that $\vola(\Delta^n_{i_j})\geq (1-\vare_n)v_n$ for every $j=1,\ldots,k$.
Let $\sigma_{i_j}$ be the
straight simplex corresponding to $\Delta^n_{i_j}$. Our assumptions imply that
the dihedral angle $\alpha_{i_j}$
of $\sigma_{i_j}(\Delta^n_{i_j})$ at $F$ is well-defined. Moreover, 
Lemma~\ref{or} gives
$$
\sum_{j=1}^k \alpha_{i_j}=\pi\ ,
$$
which contradicts 
Lemma~\ref{maximal-angle}.

Of course, a small simplex could have several $(n-2)$-faces in the boundary, but since an $n$-simplex has exactly $(n+1)n/2$ faces
of codimension two, we can bound the number of small simplices by the number of $(n-2)$-dimensional faces in $\bb |P|$,
$$ N_{\mathrm{small}} \geq \frac{2}{(n+1)n} \sharp \{ (n-2)\mathrm{-faces \ in \  }  \bb |P|\}.$$
An $(n-1)$-simplex has exactly $n$ faces of codimension one. 
Moreover, 
since $\bb P$ is an $(n-1)$-dimensional pseudomanifold without boundary,
every $(n-2)$-face of $\bb |P|$ is shared by exactly two
$(n-1)$-simplices, so the number of $(n-2)$-faces of $\bb |P|$
is equal to $(n/2)c( \bb P)$, where $c(\bb P)$ is the number of $(n-1)$-simplices of $\bb P$. 
So inequality \eqref{eq:boundary}, i.e.
$$
c(\bb P)=\|\bb z\|_\mathbb{Z}\geq d\cdot \|\bb M\|\ ,
$$
concludes the proof of the lemma.
\end{proof}

To conclude the proof of Theorem~\ref{hyp:high:thm}, note that by equation~\eqref{vol2:eq} we have
\begin{align*}
d\cdot \vol(M)&=\sum_{i\in I}\vola(\Delta_i^n)=
\sum_{i\in I_\mathrm{small}} \vola(\Delta_i^n)+\sum_{i\in I\setminus I_\mathrm{small}} \vola(\Delta_i^n)\\
&\leq N_{\mathrm{small}}(1-\vare_n)v_n+(N-N_{\mathrm{small}})v_n = (N-N_{\mathrm{small}}\cdot \vare_n)v_n\ .
\end{align*}
Putting together this inequality with Lemma~\ref{boundN} 
and the inequality
 $N=\|z\|_\mathbb{Z}\leq d(\|M,\bb M\|+\vare)$ we get
$$
d\cdot \vol(M)\leq \left(d(\|M,\bb M\|+\vare)-\frac{d\cdot \vare_n}{n+1}\|\bb M\|\right)v_n \ .
$$
As $\varepsilon$ is arbitrary, after dividing each side of this inequality by $d\cdot \vol(M) $ and reordering, 
 we get
$$
\frac{\| M,\bb M\|}{\vol(M)}\geq \frac{1}{v_n}+\frac{\vare_n\cdot \|\bb M\|}{(n+1)\vol(M)}=
\frac{1}{v_n}+\frac{\vare_n\vol(\bb M)}{(n+1)v_{n-1}\vol(M)}\ ,
$$
which finishes the proof of Theorem~\ref{hyp:high:thm}.


\section{Proof of Theorem~\ref{hyperbolic:thm}}\label{proof-hyp:sec}

In order to provide lower bounds on the simplicial volume of hyperbolic
$3$-manifolds with geodesic boundary we first need to analyze some properties
of volumes of hyperbolic $3$-simplices. An essential tool for computing
such volumes is the \emph{Lobachevsky function} $L\colon \R\to \R$
defined by the formula
$$
L(\theta)=-\int_0^\theta \log |2\sin u|\, du\ .
$$
In a nondegenerate ideal $3$-simplex, opposite sides subtend
isometric angles, the sum of the angles of any triple of edges
sharing a vertex is equal to $\pi$ and the simplex
is determined up to isometry by its dihedral angles.
The following result is proved by Milnor in~\cite[Chapter 7]{Thurston}, and
plays a fundamental role in the computation of volumes
of hyperbolic $3$-simplices. 

\begin{prop}\label{milnor:prop}
 Let $\Delta$ be a nondegenerate ideal simplex with angles $\alpha,\beta,\gamma$.
Then
$$
\vol(\Delta) = L(\alpha)+L(\beta)+L(\gamma)\ .
$$
Moreover, 
$$
\vol(\Delta)\leq 3L(\pi/3)=v_3\approx 1.014942\ ,
$$
where the equality holds if and only if $\alpha=\beta=\gamma=\pi/3$ (\emph{i.e.}~$\Delta$ is regular).
\end{prop}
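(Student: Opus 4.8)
The statement to be proved is Milnor's formula for the volume of a nondegenerate ideal $3$-simplex together with the sharp upper bound $v_3=3L(\pi/3)$. The plan is to prove the volume formula first by an explicit integration in the upper half-space model, and then extract the maximization as a calculus exercise using the concavity properties of $L$.

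\textbf{Step 1: Normalizing the simplex and the volume formula.} First I would recall the combinatorics of an ideal $3$-simplex: there are three pairs of opposite edges, opposite edges carry equal dihedral angles $\alpha,\beta,\gamma$, and at each ideal vertex the link is a Euclidean triangle with angles $\alpha,\beta,\gamma$, so $\alpha+\beta+\gamma=\pi$. Place one vertex at $\infty$ in the upper half-space model $\matH^3=\{(x,y,t):t>0\}$; then the three faces through $\infty$ are vertical planes and the fourth face is a hemisphere, which we may take to be the unit hemisphere centered at the origin. The vertical projection of the simplex to $\matC=\{t=0\}$ is the Euclidean triangle $T$ with vertices the other three ideal points, inscribed in the unit circle, and its angles are exactly $\alpha,\beta,\gamma$. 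The volume is then
$$
\vol(\Delta)=\int_{T}\int_{\sqrt{1-x^2-y^2}}^{\infty}\frac{dt}{t^{3}}\,dx\,dy=\frac{1}{2}\int_{T}\frac{dx\,dy}{1-x^2-y^2}\ .
$$
Passing to polar coordinates and subdividing $T$ by the three segments from the circumcenter $O$ to the vertices, the integral breaks into six right-triangle pieces; a direct computation of each piece (integrating in $r$ first, then in the angular variable, and recognizing $-\log|2\sin u|$ as the integrand) yields a contribution $\tfrac12 L(\theta)$ for the appropriate angle $\theta$. Summing, and using that the six angles at $O$ pair up to $2\alpha,2\beta,2\gamma$ while the relation $L(2\theta)=2L(\theta)-2L(\pi/2-\theta)$ and evenness/periodicity of $L$ collapse everything, one arrives at $\vol(\Delta)=L(\alpha)+L(\beta)+L(\gamma)$. (Alternatively, one may simply cite Milnor's computation in \cite[Chapter 7]{Thurston}, since the excerpt already permits that reference; but I would at least indicate the half-space computation for completeness.)

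\textbf{Step 2: The maximization.} Having the formula, I must maximize $f(\alpha,\beta,\gamma)=L(\alpha)+L(\beta)+L(\gamma)$ subject to $\alpha+\beta+\gamma=\pi$, $\alpha,\beta,\gamma>0$. From the definition $L'(\theta)=-\log|2\sin\theta|$, hence $L''(\theta)=-\cot\theta$, so $L$ is strictly concave on $(0,\pi/2)$ and strictly convex on $(\pi/2,\pi)$; also $L(0)=L(\pi)=0$ and $L$ vanishes at the boundary of the parameter region in a controlled way. A Lagrange-multiplier computation gives $L'(\alpha)=L'(\beta)=L'(\gamma)$, i.e.\ $\sin\alpha=\sin\beta=\sin\gamma$, so at an interior critical point each pair of angles is either equal or supplementary; combined with $\alpha+\beta+\gamma=\pi$ the only solution is $\alpha=\beta=\gamma=\pi/3$. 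To promote this to a genuine global maximum (the function is not globally concave), the cleanest route is: by symmetry we may assume $\alpha\le\beta\le\gamma$, so $\alpha\le\pi/3$; if $\beta,\gamma$ both lie in $(0,\pi/2]$ then concavity of $L$ on that interval and Jensen force $f\le 3L(\pi/3)$ with equality only at the center; the remaining case $\gamma>\pi/2$ is handled by replacing $(\beta,\gamma)$ with $(\beta',\gamma')=(\tfrac{\beta+\gamma}{2},\tfrac{\beta+\gamma}{2})$ if $\beta+\gamma\le\pi$... more simply, one checks directly that shifting weight from a large angle toward $\pi/3$ strictly increases $f$, using $L'$ decreasing on $(0,\pi/2)$ together with the identity $L(\theta)+L(\pi/2-\theta)$ relating the two monotonicity regimes. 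Either way one concludes $\vol(\Delta)\le 3L(\pi/3)=v_3$ with equality exactly for the regular simplex.

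\textbf{Main obstacle.} The volume formula itself is routine once the model is set up; the genuinely delicate point is the global maximization, because $L$ is \emph{not} concave on all of $(0,\pi)$, so a one-line Jensen argument does not suffice. The work is in organizing the case analysis so that every configuration of the triple $(\alpha,\beta,\gamma)$ on the simplex $\alpha+\beta+\gamma=\pi$ is pushed to the barycenter by a volume-increasing move, and in verifying strictness of the inequality away from $(\pi/3,\pi/3,\pi/3)$. This is exactly the content of Milnor's argument, so in the paper I would present the half-space integration in Step 1 and then state that the maximization is the standard analysis of $L$ via $L''=-\cot$, giving the one short convexity/monotonicity argument that closes the case, and referring to \cite[Chapter 7]{Thurston} for the variant details.
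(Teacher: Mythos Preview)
The paper does not actually prove this proposition: it is stated with the attribution ``proved by Milnor in~\cite[Chapter 7]{Thurston}'' and no argument is given in the text. Your proposal therefore goes well beyond what the paper does, and what you sketch is precisely Milnor's original proof (half-space integration over the inscribed Euclidean triangle, followed by the Lagrange/convexity analysis of $L$), so there is no divergence of method to discuss---you have simply supplied the argument the paper chose to outsource. Your own parenthetical remark that one may just cite \cite[Chapter 7]{Thurston} is exactly what the paper does.

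One small comment on content: your Step~2 is correct in spirit but the case $\gamma>\pi/2$ is left a little vague (``more simply, one checks directly\ldots''). A clean way to close it, should you want to write it out, is to use the identity $L(\pi-\theta)=-L(\theta)$ (from oddness and $\pi$-periodicity) to rewrite $L(\alpha)+L(\beta)+L(\gamma)=L(\alpha)+L(\beta)-L(\alpha+\beta)$ when $\gamma>\pi/2$, and then observe this quantity is strictly below $3L(\pi/3)$ by a direct estimate on $(0,\pi/2)$; or simply note that on the boundary of the closed simplex $\{\alpha+\beta+\gamma=\pi,\ \alpha,\beta,\gamma\ge 0\}$ the function vanishes, so the unique interior critical point you found must be the global maximum. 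Either of these makes the ``main obstacle'' you flag disappear in one line.
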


We say that a nondegenerate geodesic simplex with nonideal vertices
$\Delta$ is:
\begin{itemize}
 \item 
 \emph{1-obtuse} if it has at least
one nonacute dihedral angle, 
\item 
\emph{2-obtuse} if there exist two edges of $\Delta$
which share a vertex and subtend nonacute dihedral angles,
\item
\emph{3-obtuse} if there exists a face $F$ of $\Delta$
such that each edge of $F$ subtends a nonacute dihedral angle.
\end{itemize}

\begin{lemma}\label{3-obtuse}
 There do not exist $3$-obtuse geodesic simplices. Moreover,
if $\Delta$ is a $2$-obtuse geodesic simplex, then
$$
\vol(\Delta)\leq  \frac{v_3}{2}\ .
$$
\end{lemma}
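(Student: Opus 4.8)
\textbf{Proof proposal for Lemma~\ref{3-obtuse}.}

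The plan is to reduce both assertions to a single geometric principle: if a geodesic $3$-simplex $\Delta\subseteq\matH^3$ with nonideal vertices has a nonacute dihedral angle at an edge $e$, then the opposite edge $e'$ and, more generally, the combinatorial structure around $e$ is constrained. The cleanest route uses the standard \emph{truncation/doubling} or \emph{orthoscheme decomposition} trick, but the most economical approach for this lemma is via passing to the ideal case by ``pushing vertices to infinity'', which only increases volume (by Theorem~\ref{maximal:teo} and monotonicity of volume under inclusion of simplices). First I would record the elementary hyperbolic-trigonometry fact that in a triangle in $\matH^2$ with all vertices at finite distance the angles sum to strictly less than $\pi$; applying this to the $2$-dimensional faces of $\Delta$ and to the links of its vertices will give the angle inequalities that kill the $3$-obtuse case.

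For the nonexistence of $3$-obtuse simplices: suppose $F$ is a face of $\Delta$ whose three edges $e_1,e_2,e_3$ all subtend dihedral angles $\geq \pi/2$. Each $e_i$ is an edge of $F$, and the three dihedral angles at $e_1,e_2,e_3$ are exactly the angles of the ``opposite'' vertex figures, but more directly: consider the vertex $v$ of $\Delta$ \emph{not} lying on $F$. The link of $v$ in $\Delta$ is a spherical triangle whose three side lengths are the dihedral angles of $\Delta$ along the three edges emanating from $v$; meanwhile the face $F$ together with the three faces through $v$ shows that the three edges of $F$ have dihedral angles that, by a Gauss--Bonnet / spherical-triangle argument at the three vertices of $F$, must each be strictly less than $\pi$ but cannot all simultaneously be $\geq\pi/2$ because the three triangular faces meeting along $e_1,e_2,e_3$ would then force the link triangles at the vertices of $F$ to have angle sum $\geq 3\pi/2$, contradicting the finite-vertex angle-sum bound. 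I would make this precise by writing the three link inequalities and adding them. The key obstacle here is bookkeeping: correctly identifying which dihedral angles appear in which vertex link, and making sure the strictness of the hyperbolic (rather than Euclidean) angle bound is what produces the contradiction rather than a mere boundary case.

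For the volume bound in the $2$-obtuse case: let $e, e'$ be two edges sharing a vertex $v$ with dihedral angles $\alpha(\Delta,e),\alpha(\Delta,e')\geq\pi/2$. The strategy is to cut $\Delta$ by the geodesic plane $H$ through $e$ perpendicular to... — more robustly, decompose $\Delta$ along the plane containing $e$ and the midpoint of the opposite edge (an orthoscheme-type subdivision), or simply argue: the two faces of $\Delta$ adjacent to $e$ make an angle $\geq\pi/2$, so reflecting $\Delta$ in one of these faces produces an embedded (immersed with nonoverlapping interiors) copy of $\Delta$ adjacent to it; doing this for $e$ places $2\vol(\Delta)$ inside a geodesic $3$-simplex (the convex hull of $\Delta\cup\Delta^{\mathrm{refl}}$), and using the second obtuse edge $e'$ one sees this hull is still a genuine geodesic $3$-simplex, hence has volume $\leq v_3$ by Theorem~\ref{maximal:teo}; therefore $\vol(\Delta)\leq v_3/2$. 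The delicate point — and I expect this to be the main obstacle — is verifying that the reflected simplices have disjoint interiors and that their union is contained in (or reorganizes into) a single geodesic simplex: this is exactly where both obtuse angles are needed, since if only one angle were obtuse the reflected copy across that face could fail to stay inside a simplex, and the argument would only give $\vol(\Delta)\le v_3$ (which is Theorem~\ref{maximal:teo} and carries no new information). I would spell out the convexity of $\Delta\cup\Delta^{\mathrm{refl}}$ using that the dihedral angle along $e$ becomes $\geq\pi$ after doubling, hence the two faces through $e$ flatten or fold outward, so no concavity is introduced along $e$; the second obtuse angle at $e'$ handles the potential concavity along the image of $e'$.
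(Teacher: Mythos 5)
Your plan for the $3$-obtuse case does not work as stated, and the gap is not merely one of bookkeeping. The link of a finite vertex of a geodesic $3$-simplex in $\matH^3$ is a \emph{spherical} triangle, and the angles of that spherical triangle are the dihedral angles of $\Delta$ at the edges through that vertex (the side lengths, not the angles, are the face angles). A spherical triangle has angle sum strictly \emph{greater} than $\pi$, with no upper bound short of $3\pi$, so deducing ``angle sum $\geq 3\pi/2$'' at a vertex link gives no contradiction whatsoever; the ``finite-vertex angle-sum bound'' you invoke (angle sum $<\pi$) is a fact about \emph{hyperbolic} triangles, i.e.\ about the faces of $\Delta$, and it bounds sums of face angles, not sums of dihedral angles. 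The paper's argument is entirely different and much more elementary: let $H$ be the geodesic plane of the face $F$ and let $v$ be the opposite vertex; the dihedral angle at an edge $e$ of $F$ is acute iff the nearest-point projection of $v$ onto $H$ lies on the $F$-side of the geodesic through $e$, so three nonacute dihedral angles would force this projection into the intersection of the three far half-planes, which is empty. Nothing about spherical links or pushing vertices to infinity is needed here.

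Your $2$-obtuse argument is pointed in the same general direction as the paper's but is not pinned down enough to be checkable. The paper reflects across $H$, the plane of the face $F$ spanned by the two obtuse edges $e,e'$ (not across ``one of the two faces adjacent to $e$'', which is ambiguous: one of those is $F$, the other is not, and only reflecting in $F$ works). Writing $v_0$ for the common vertex of $e$ and $e'$ and $v$ for the vertex opposite $F$, the crucial step — which your proposal never supplies — is to show that the convex hull of $\Delta\cup r(\Delta)$ is actually a geodesic $3$-simplex $\widehat\Delta$ with vertices $v$, $v'=r(v)$ and the two vertices of $F$ other than $v_0$. This holds precisely because the projection $\pi(v)$ lies in the region of $H$ opposite to $F$ across both geodesics through $e$ and $e'$ (that is the content of the two nonacute dihedral angles), which forces $v_0$ to lie in the hull of $\{\pi(v),$ the other two vertices of $F\}$ and hence in $\widehat\Delta$. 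Your appeal to ``no concavity is introduced along $e$'' is in fact the opposite of what happens: after doubling, the dihedral angle at $e$ becomes $\geq\pi$, so the union \emph{is} nonconvex there; what saves the argument is that the hull absorbs $v_0$ and collapses the five candidate vertices to four. Once this is made precise, $2\vol(\Delta)=\vol(\Delta\cup r(\Delta))\leq\vol(\widehat\Delta)\leq v_3$ as you intended.
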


\begin{proof} Let $F$ be a face of a nondegenerate geodesic simplex $\Delta$. Let $H$ be the geodesic plane containing $F$ and let $\pi\colon\matH^3\to H$ denote the nearest point projection. 
Let $v\in \mathno$ be the vertex of $\Delta$ not contained in $H$. 

For every edge $e$ of $F$, the geodesic line containing $e$ divides $H$ into two regions. Note that the angle at $e$ is acute if and only if the projection  $\pi(v)$ 
of the last vertex point belongs to the region containing $F$. Consider the three geodesic lines containing the three edges of $F$. Since no point in $H$ 
can simultaneously be contained in the region of $H$ bounded by each of these geodesics and not containing $F$, it follows that $\Delta$ cannot be $3$-obtuse. 

Suppose now that two of the edges of $F$ subtend nonacute dihedral angles and consider the four regions of $H$ delimited by the two corresponding geodesics. 
Denote by $v_0$ the vertex of $F$ given as the intersection of these two geodesics. Note that  $\pi(v)$ belongs to the region opposite to the region containing $F$. 
Denote by $r$ the reflection along $H$. Set $v'=r(v)$ and $\Delta'=r(\Delta)$. The convex hull of $\Delta$ and $\Delta'$ is equal to the convex hull of $F$,$v$ and $v'$. 
Let $\widehat{\Delta}$ be the geodesic simplex with vertices $v,v'$ and the two vertices of $F$ opposite to $v_0$. 
Since $v_0$ belongs to $\widehat{\Delta}$ (see Figure~\ref{delta:fig}) it follows that $\Delta \cup \Delta'\subset \widehat{\Delta}$ and hence
$$
v_3\geq \vol(\widehat{\Delta})\geq \vol(\Delta\cup \Delta' )=2\vol(\Delta)\ ,
$$
which finishes the proof of the lemma.
\end{proof}
\begin{figure}
\begin{center}
\input{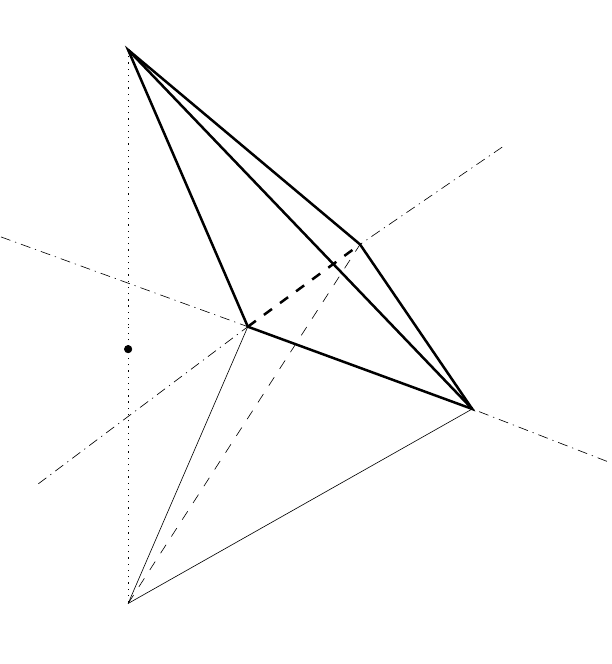_t}
\caption{A $2$-obtuse simplex $\Delta$ and its mirror copy $\Delta'$.\protect{\label{delta:fig}}}
\end{center}
\end{figure}

Recall that 
$$
G=\frac{1}{1^2}-\frac{1}{3^2}+\frac{1}{5^2}-\frac{1}{7^2}+\ldots\,  \approx 0.915965
$$ 
is Catalan's constant.

\begin{lemma}\label{1-obtuse}
If $\Delta$ is a $1$-obtuse geodesic simplex, then
$$
\vol(\Delta)\leq  G\ .
$$
\end{lemma}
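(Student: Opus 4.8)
The plan is to enlarge $\Delta$ to an \emph{ideal} simplex that still possesses a nonacute dihedral angle, and then to compute the maximal volume of such an ideal simplex using Milnor's formula of Proposition~\ref{milnor:prop}.

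\emph{Reduction to the ideal case.} Let $e$ be an edge of $\Delta$ whose dihedral angle $\alpha$ satisfies $\pi/2\le\alpha<\pi$, let $p,q$ be the endpoints of $e$ and $a,b$ the remaining two vertices, and let $F_1=\langle p,q,a\rangle$, $F_2=\langle p,q,b\rangle$ be the faces of $\Delta$ containing $e$. Work in the upper half-space model of $\matH^3$, with the complete geodesic through $e$ equal to the vertical axis, i.e.\ with ideal endpoints $0$ and $\infty$; after a rotation we may assume that the vertical half-plane $\Pi_1$ containing $F_1$ lies over the positive real axis and that the vertical half-plane $\Pi_2$ containing $F_2$ makes (Euclidean, hence hyperbolic) angle $\alpha$ with $\Pi_1$, measured on the side of $\Delta$. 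Since $p$ and $q$ lie on the vertical axis, $\Delta\subseteq\mathrm{conv}(0,\infty,a,b)$. Let $a_0\in\partial\Pi_1$ and $b_0\in\partial\Pi_2$ be the ideal feet of the vertical geodesics through $a$ and $b$; as $a$ lies on the geodesic from $a_0$ to the vertex $\infty$, and likewise for $b$, convexity gives
$$
\Delta\subseteq\mathrm{conv}(0,\infty,a,b)\subseteq\mathrm{conv}(0,\infty,a_0,b)\subseteq\mathrm{conv}(0,\infty,a_0,b_0)=:\widehat\Delta.
$$
The four ideal points $0,\infty,a_0,b_0$ are not concircular (the circle through $0,\infty,a_0$ is the real line, which avoids $b_0$), so $\widehat\Delta$ is a nondegenerate ideal simplex; moreover its two faces through the edge $[0,\infty]$ still lie in $\Pi_1$ and $\Pi_2$ because $a_0\in\partial\Pi_1$ and $b_0\in\partial\Pi_2$, so the dihedral angle of $\widehat\Delta$ at $[0,\infty]$ is again $\alpha\ge\pi/2$. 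By monotonicity of volume under inclusion, $\vol(\Delta)\le\vol(\widehat\Delta)$.

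\emph{The ideal case.} Let $\widehat\Delta$ be a nondegenerate ideal simplex with dihedral angles $\mu,\nu,\rho$ (which occur in opposite pairs, with $\mu+\nu+\rho=\pi$) and with $\mu\ge\pi/2$. By Proposition~\ref{milnor:prop}, $\vol(\widehat\Delta)=L(\mu)+L(\nu)+L(\rho)$; since $L$ is odd and $\pi$-periodic, $L(\mu)=-L(\pi-\mu)=-L(\nu+\rho)$, so
$$
\vol(\widehat\Delta)=L(\nu)+L(\rho)-L(\nu+\rho),\qquad \nu,\rho>0,\ \nu+\rho=\pi-\mu\le\tfrac{\pi}{2}.
$$
Using $L'(\theta)=-\log|2\sin\theta|$, on the region $\nu+\rho<\pi/2$ we have $\partial_\nu\big(L(\nu)+L(\rho)-L(\nu+\rho)\big)=\log\big(\sin(\nu+\rho)/\sin\nu\big)>0$ since $0<\nu<\nu+\rho<\pi/2$, and similarly $\partial_\rho>0$; together with the vanishing of the expression as $\nu\to0$ or $\rho\to0$, this shows the maximum over $\{\nu,\rho>0,\ \nu+\rho\le\pi/2\}$ is attained on $\nu+\rho=\pi/2$, where it equals $g(\nu):=L(\nu)+L(\pi/2-\nu)$. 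As $g'(\nu)=\log\cot\nu$, the function $g$ is maximal at $\nu=\pi/4$ with $g(\pi/4)=2L(\pi/4)$. Finally, from the Fourier expansion $L(\theta)=\tfrac12\sum_{n\ge1}\sin(2n\theta)/n^2$ we obtain $2L(\pi/4)=\sum_{n\ge1}\sin(n\pi/2)/n^2=1-1/3^2+1/5^2-\cdots=G$. Hence $\vol(\widehat\Delta)\le G$, and therefore $\vol(\Delta)\le G$.

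\emph{Main obstacle.} The delicate point is the reduction step: one must enlarge $\Delta$ without disturbing the nonacute dihedral angle. This succeeds precisely because pushing the two vertices $a,b$ not on $e$ down along vertical geodesics leaves the two faces through $e$ inside the fixed vertical half-planes $\Pi_1,\Pi_2$, so the angle between them is unchanged; the remaining points (nondegeneracy of $\widehat\Delta$, monotonicity of volume under the stated inclusions, and the one-variable optimization together with the identity $2L(\pi/4)=G$) are routine.
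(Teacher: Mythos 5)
Your proof is correct and follows essentially the same two-step strategy as the paper's: first enlarge $\Delta$ to a containing ideal simplex that preserves a nonacute dihedral angle, then maximize the volume of a $1$-obtuse ideal simplex via Milnor's formula. The differences are only in presentation --- you use the upper half-space model and vertical projection of the two off-edge vertices (the paper pushes them to ideal endpoints of geodesics through one of the edge vertices), and you carry out a direct two-variable optimization with explicit derivatives and the Fourier series for $L$ (the paper does a two-stage optimization over $\beta,\gamma$ then $\alpha$ and cites Thurston for the identity $L(\pi/2)+2L(\pi/4)=G$) --- but the argument is substantively the same.
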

\begin{proof}
Suppose first that $\Delta$ is a $1$-obtuse ideal geodesic simplex. Let $\alpha,\beta,\gamma$ be its three dihedral angles with $\alpha \geq \pi/2$ and $\beta+\gamma=\pi-\alpha$. 
Using Proposition~\ref{milnor:prop}, one can readily show that when $\alpha\geq \pi/2$ is fixed, the maximum volume $\vol(\Delta)=L(\alpha)+L(\beta)+L(\gamma)$ is attained at $\beta=\gamma=(\pi-\alpha)/2$. 
Another easy computation based on Proposition~\ref{milnor:prop} implies
that, under the assumption that $\alpha\geq \pi/2$, the quantity
$L(\alpha)+2L((\pi-\alpha)/2)$ attains its maximum at $\alpha=\pi/2$. Therefore,
we may conclude that
$$
\vol(\Delta)\leq L(\pi/2)+2L(\pi/4)= G \ ,
$$
where the last equality is proved in~\cite[Chapter 7]{Thurston}.

Let now $\Delta$ be a $1$-obtuse nonideal geodesic simplex. The lemma will follow once we exhibit a $1$-obtuse ideal geodesic simplex $\overline{\Delta}$ with $\Delta\subset \overline{\Delta}$.
 Let $v_1,v_2$ be the vertices on the edge $e$ subtending the nonacute angle. Two of the vertices of $\overline{\Delta}$ will be the two endpoints $w_1,w_2$ of the geodesic 
through $v_1,v_2$. Let $v,v'$ be the two remaining vertices of $\Delta$ and denote by $F,F'$ the two faces of $\Delta$ opposite to $v'$ and $v$ respectively. 
Let $w$, respectively $w'$, be vertices on the boundary of the hyperplane containing $F$, resp.~$F'$, and such that the convex hull of $v_1,v_2,w$, resp.~$v_1,v_2,w'$, 
contains $v$, resp.~$v'$. (For example, pick $w$, resp.~$w'$, as the endpoint of the geodesic through $v_1$ and $v$, resp.~$v'$.) 
Let $\overline{\Delta}$ be the ideal geodesic simplex with vertices $w_1,w_2,w,w'$. As it contains all the vertices of $\Delta$, 
the simplex $\Delta$ is indeed contained in $\overline{\Delta}$. Furthermore, it is still $1$-obtuse as its dihedral angle on the edge with endpoints $w_1,w_2$ 
is equal to the dihedral angle of $\Delta$ at the edge with endpoints $v_1,v_2$.  
\end{proof}

\subsection*{Proof of Theorem~\ref{hyperbolic:thm}}\label{hyp:last:sub}
Let $z$ be the integral cycle provided by Proposition~\ref{cyclehyp:prop},
let $P$ be the associated pseudomanifold, and 
let $\Delta^3_1,\ldots,\Delta^3_N$ be the simplices of $P$.
In equation~(\ref{equ: algebraic volume}) a well-defined algebraic volume 
$\vola(\Delta^3_i)$ is associated to 
every $\Delta^3_i$, in such a way that the equality
\begin{equation}\label{voltot:eq}
d\cdot \vol(M)= \sum_{i=1}^N \vola(\Delta^3_i)
\end{equation}
holds. We also say that $\Delta^n_{i}$ is $1$-, $2$- or $3$-obtuse
if the corresponding geodesic simplex in $\matH^n$ is
(by Lemma~\ref{3-obtuse}, the last possibility
cannot hold in fact).

Let $\Omega_i$, $i=0,\ldots,4$, be the set of simplices
of $P$ having exactly $i$ boundary  $2$-faces. 
We denote by $t_i$ the number of elements
of $\Omega_i$. By Proposition~\ref{cyclehyp:prop}
we have $\Omega_2=\Omega_3=\Omega_4=\emptyset$, so that
$$
t_2=t_3=t_4=0,\qquad \| z\|_\Z=t_0+t_1=N\ .
$$
We denote by $t_{1,n}$ the number
of nonpositive simplices in $\Omega_1$ (\emph{i.e.} simplices with nonpositive volume), and by $t_{1,1}$ (resp.~$t_{1,2}$)
the number of $1$-obtuse (resp.~$2$-obtuse) positive simplices in $\Omega_1$.
Moreover, we say that an edge $e$ of the $2$-dimensional pseudomanifold $\bb P$
is \emph{nice} if $e$ is the edge of a
simplex in $\Omega_0$.
We also say that an edge of $\bb P$ is \emph{bad} if it is not nice.
An edge of the topological realization $\bb |P|$ is \emph{nice} if it is the image of at least one nice edge in $\bb P$.
An edge in $\bb |P|$ is \emph{bad} if it is not nice.
Notice that bad edges in $\bb|P|$ are \emph{not} the image of bad edges in $\bb P$,
since a nice edge in $\bb |P|$ will be the image of a certain number of
nice edges in $\bb P$ and necessarily two bad edges of $\bb P$ corresponding to the
two (possibly nondistinct) tetrahedra having as 2-faces the 2-faces in $\bb |P|$ containing the
original nice edge.
We denote by $E_\mathrm{bad}$ (resp.~$E_\mathrm{nice}$) the number
of bad (resp.~nice) edges of $\bb |P|$. 
%
\begin{lemma}\label{E:obtuse:lemma}
 We have
$$
3t_{1,n}+2t_{1,2}+t_{1,1}\geq E_\mathrm{bad}\ .
$$
\end{lemma}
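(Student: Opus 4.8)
The goal is to bound the number $E_\mathrm{bad}$ of bad edges in $\bb|P|$ by a weighted count of obtuse and nonpositive simplices in $\Omega_1$. The key observation is the following: since elements of $\Omega_0$ have no boundary $2$-face, every bad edge of $\bb|P|$ must lift to edges that are contained only in simplices of $\Omega_1$ (the simplices with exactly one boundary $2$-face), and moreover each such simplex in $\Omega_1$ that "is responsible" for a bad edge must exhibit some geometric defect — essentially a nonacute dihedral angle — at that edge. Indeed, suppose $F$ is an $(n-2)=1$-dimensional face (an edge) of $\bb P$ lying in the interior boundary face $V$ shared by the simplices $\Delta^3_{i_1},\dots,\Delta^3_{i_k}$ of $P$ that contain $F$. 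If all these simplices were positive and had acute dihedral angle at $F$, then by Lemma~\ref{or} the angles would sum to $\pi$ while each is $<\pi/2$, forcing $k\geq 3$; but one checks that in this situation the edge is in fact nice, because at least one of the simplices involved, when its boundary $2$-faces are traced, must be an $\Omega_0$ simplex — here I would reproduce the combinatorial bookkeeping relating the $2$-faces of $\bb|P|$ containing a given edge to the tetrahedra of $P$, exactly as in the parenthetical remark just before Lemma~\ref{E:obtuse:lemma}.

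\textbf{Key steps.} First I would set up the correspondence: a bad edge $\bar e$ of $\bb|P|$ is, by definition, not the image of any nice edge of $\bb P$, so every edge of $\bb P$ mapping to $\bar e$ is bad, i.e.~is not an edge of any simplex in $\Omega_0$. Second, I would argue that at each bad edge, among the simplices of $P$ abutting the corresponding interior $2$-face, at least one must fail to have a well-defined acute dihedral angle there — either it is nonpositive (hence possibly degenerate, no angle defined), or it is positive with a nonacute angle at that edge. This uses Lemma~\ref{or}: if all abutting positive simplices had acute angles at $F$ and there were no nonpositive ones, the angle sum $\pi$ would be achieved by angles each $<\pi/2$, and then a direct check (of the type in the parenthetical combinatorial remark) shows $\bar e$ would be nice — contradiction. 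Third, I would count: a single simplex $\Delta^3_i\in\Omega_1$ has $6$ edges, but only those edges lying on $\bb|P|$ are relevant, and by Proposition~\ref{cyclehyp:prop}(4) a simplex in $\Omega_1$ has at most three edges in $\bb|P|$. A nonpositive simplex in $\Omega_1$ can thus be "charged" by at most $3$ bad edges, contributing the coefficient $3$ of $t_{1,n}$; a positive $2$-obtuse simplex has at most two nonacute dihedral angles, so it can be charged by at most $2$ bad edges, giving the coefficient $2$ of $t_{1,2}$; a positive $1$-obtuse simplex has exactly one nonacute angle, giving the coefficient $1$ of $t_{1,1}$. Summing over all bad edges, each bad edge is charged to at least one such simplex, yielding $3t_{1,n}+2t_{1,2}+t_{1,1}\geq E_\mathrm{bad}$.

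\textbf{Main obstacle.} The delicate point is the precise combinatorial argument showing that a bad edge forces a geometric defect — i.e.~ruling out the configuration in which all abutting positive simplices have acute angles at the edge and the edge is nevertheless bad. This requires carefully tracking, for each $2$-face of $\bb|P|$ containing a given edge of $\bb|P|$, which tetrahedron of $P$ it bounds, and using that elements of $\Omega_0$ contribute no boundary $2$-faces while elements of $\Omega_1$ contribute exactly one; one must also be careful about the subtlety (noted in the paper) that bad edges of $\bb|P|$ are not simply images of bad edges of $\bb P$, since a nice edge of $\bb|P|$ receives both nice edges and, from the two tetrahedra meeting along the two boundary $2$-faces at that edge, certain bad edges of $\bb P$. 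Once this identification is made, the angle-sum contradiction via Lemma~\ref{or} together with Lemma~\ref{maximal-angle}'s analogue is routine. The edge-counting in step three is straightforward given Proposition~\ref{cyclehyp:prop}(4).
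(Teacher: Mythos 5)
Your proposal follows the same overall strategy as the paper: charge each bad edge of $\bb|P|$ to one of the two $\Omega_1$-simplices whose boundary $2$-face contains that edge and which exhibits a defect (nonpositive, or nonacute dihedral angle at that edge), and then bound the number of charges each simplex can receive using the fact that no simplex is $3$-obtuse. The final counting $3t_{1,n}+2t_{1,2}+t_{1,1}\geq E_\mathrm{bad}$ is the same in both.

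However, there is a real gap at the pivotal step. You need to know that at a bad edge $e$, one of the two $\Omega_1$-simplices $\Delta^3_{i_1},\Delta^3_{i_2}$ whose boundary $2$-faces $T_{i_1},T_{i_2}$ contain $e$ is the defective one. The paper handles this by observing (citing Lemma~4.4 of the BFP15 reference) that for a bad edge the interior $2$-faces $F_{i_1},F_{i_2}$ are glued directly to each other in $|P|$, so the only two tetrahedra around $e$ are $\Delta^3_{i_1}$ and $\Delta^3_{i_2}$; then Lemma~\ref{or} gives $\alpha_{i_1}+\alpha_{i_2}=\pi$, forcing a nonacute angle if both are positive. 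Your argument instead supposes all abutting simplices are positive and acute, concludes $k\geq 3$ from the angle sum, and then asserts that ``a direct check (of the type in the parenthetical combinatorial remark) shows $\bar e$ would be nice.'' That parenthetical remark does not contain such an argument: it merely records the distinction between nice/bad edges in $\bb P$ and in $\bb|P|$. The implication ``$k\geq 3\Rightarrow$ some simplex in the chain is in $\Omega_0$'' does in fact hold, but you need to prove it: if a middle simplex $\Delta^3_j$ of the chain were in $\Omega_1$, the three edges of its unique boundary $2$-face $T_j$ lie in $\bb|P|$, and $e$ — which is an edge of $\Delta^3_j$ not on $T_j$ since both $2$-faces of $\Delta^3_j$ containing $e$ are interior — gives a fourth edge of $\Delta^3_j$ in $\bb|P|$, contradicting Proposition~\ref{cyclehyp:prop}(4). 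You gesture at Proposition~\ref{cyclehyp:prop}(4) only for the later edge-counting, not for this crucial step, and without it you have no guarantee that the defective simplex lies in $\Omega_1$ (hence contributes to $t_{1,n},t_{1,1},t_{1,2}$) rather than in $\Omega_0$.

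A second, smaller point: you invoke ``Lemma~\ref{maximal-angle}'s analogue,'' but Lemma~\ref{maximal-angle} is stated and proved only for $n\geq 4$ and plays no role here. In dimension $3$ the argument needs only the elementary fact that two positive angles summing to $\pi$ cannot both be strictly less than $\pi/2$, together with Lemma~\ref{or}. Finally, the statement that a $2$-obtuse simplex ``has at most two nonacute dihedral angles'' is not literally true; the correct fact, which is what the counting actually requires, is that it has at most two nonacute angles at edges of any given $2$-face (otherwise it would be $3$-obtuse, which Lemma~\ref{3-obtuse} forbids).
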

\begin{proof}
Let $e$ be a bad edge of $\bb |P|$, let $T_{i_1}$, $T_{i_2}$ be the triangles of $\bb P$ adjacent to $e$, and let  $\Delta^3_{i_1}$ (resp.~$\Delta^3_{i_2}$) be the simplex of $P$ containing $T_{i_1}$ (resp.~$T_{i_2}$). 
It is easy to show that if $F_{i_1}$ (resp.~$F_{i_2}$) is the $2$-face of $\Delta^3_{i_1}$ (resp.~$\Delta^3_{i_2}$) such that
$e=F_{i_1}\cap T_{i_1}=F_{i_2}\cap T_{i_2}$, then $F_{i_1}, F_{i_2}$ are glued to each other in $|P|$
(see \cite[Lemma 4.4]{BFP15}). Also suppose that $\Delta^3_{i_1}$ and $\Delta^3_{i_2}$ are both positive
and let $\alpha_{i_j}$ be the dihedral angle of $\Delta^3_{i_j}$ at $e$. By Lemma~\ref{or}
we have $\alpha_{i_1}+\alpha_{i_2}=\pi$.
As a consequence, either $\Delta^3_{i_1}$ or $\Delta^3_{i_2}$ (or both in case $\alpha_{i_1}=\alpha_{i_2}=\pi/2$) has a nonacute angle
along $e$. We have thus shown that at every bad edge of $\bb |P|$ there is (at least)
one incident
simplex that either is nonpositive or 
has a nonacute angle at an edge of its
boundary face. Since we know that no simplex of $P$ can be $3$-obtuse, the conclusion follows from an obvious double counting argument.
\end{proof}

\begin{prop}\label{estimate1:prop}
We have 
$$
\| M,\bb M\|+\vare \geq \frac{\vol(M)}{v_3}+\left(1-\frac{G}{v_3}\right)\frac{E_\mathrm{bad}}{d}\ .
$$ 
\end{prop}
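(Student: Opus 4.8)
The plan is to estimate $d\cdot\vol(M)=\sum_{i=1}^N\vola(\Delta^3_i)$ from above, bounding each simplex's algebraic volume according to which of the classes $\Omega_0$, $\Omega_1$ it lies in and, within $\Omega_1$, whether it is nonpositive, $2$-obtuse, $1$-obtuse, or none of these. Using Theorem~\ref{maximal:teo} every simplex contributes at most $v_3$; nonpositive ones contribute at most $0$; by Lemma~\ref{3-obtuse} a $2$-obtuse positive simplex contributes at most $v_3/2$; by Lemma~\ref{1-obtuse} a $1$-obtuse positive simplex contributes at most $G$. So I would split the sum over $\Omega_0$ and over the four sub-parts of $\Omega_1$ and write
$$
d\cdot\vol(M)\ \leq\ t_0\,v_3\ +\ (t_1-t_{1,n}-t_{1,2}-t_{1,1})\,v_3\ +\ t_{1,2}\cdot\frac{v_3}{2}\ +\ t_{1,1}\,G\ +\ t_{1,n}\cdot 0\ .
$$
Rearranging, the right-hand side equals $(t_0+t_1)\,v_3-\big(t_{1,n}v_3+t_{1,2}\tfrac{v_3}{2}+t_{1,1}(v_3-G)\big)$. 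Since $v_3-G\leq v_3$ and $\tfrac{v_3}{2}\leq v_3-G$ (indeed $G\approx 0.916<v_3/2\cdot\ldots$ — one checks $v_3/2\approx 0.507$ and $v_3-G\approx 0.099$, so actually the correct inequality is the reverse: $v_3-G<v_3/2<v_3$), I must be careful about which coefficient is smallest. The point is that $v_3-G$ is the smallest of the three weights $v_3$, $v_3/2$, $v_3-G$, so
$$
t_{1,n}v_3+t_{1,2}\tfrac{v_3}{2}+t_{1,1}(v_3-G)\ \geq\ (v_3-G)\big(3t_{1,n}+2t_{1,2}+t_{1,1}\big)\ ,
$$
using $v_3\geq 3(v_3-G)$ and $v_3/2\geq 2(v_3-G)$, both of which are elementary numerical inequalities given $G\approx 0.916$ and $v_3\approx 1.015$. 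Then Lemma~\ref{E:obtuse:lemma} gives $3t_{1,n}+2t_{1,2}+t_{1,1}\geq E_\mathrm{bad}$, so the bracketed quantity is at least $(v_3-G)E_\mathrm{bad}$.

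Putting this together with $t_0+t_1=N=\|z\|_\Z\leq d(\|M,\bb M\|+\vare)$ from Proposition~\ref{cyclehyp:prop}(1) yields
$$
d\cdot\vol(M)\ \leq\ d\,(\|M,\bb M\|+\vare)\,v_3\ -\ (v_3-G)\,E_\mathrm{bad}\ .
$$
Dividing by $d\,v_3$ and rearranging gives exactly
$$
\|M,\bb M\|+\vare\ \geq\ \frac{\vol(M)}{v_3}\ +\ \Big(1-\frac{G}{v_3}\Big)\frac{E_\mathrm{bad}}{d}\ ,
$$
which is the assertion. The main obstacle, and the step requiring genuine care rather than routine bookkeeping, is verifying that $v_3-G$ really is the binding (smallest) coefficient and that the numerical inequalities $v_3\geq 3(v_3-G)$ and $v_3\geq 4(v_3-G)$ (equivalently $G\geq \tfrac23 v_3$ and $G\geq\tfrac34 v_3$) hold — the second is the tighter one and comes down to $G/v_3\approx 0.902\geq 0.75$, which is comfortably true; one should also double-check that the classification into $\Omega_0$ and the four sub-parts of $\Omega_1$ is exhaustive and that degenerate simplices (volume $0$) are harmlessly absorbed into the nonpositive count $t_{1,n}$ (or contribute $0$ and so only help). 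Everything else is the arithmetic of collecting terms.
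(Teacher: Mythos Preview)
Your proof is correct and follows essentially the same route as the paper: bound $d\cdot\vol(M)$ by $v_3$ per simplex, subtract the savings $(v_3-G)t_{1,1}+\tfrac{v_3}{2}t_{1,2}+v_3\,t_{1,n}$ from the $\Omega_1$ sub-counts, use the numerical inequalities $v_3\geq 3(v_3-G)$ and $v_3/2\geq 2(v_3-G)$ to replace this by $(v_3-G)(t_{1,1}+2t_{1,2}+3t_{1,n})$, then invoke Lemma~\ref{E:obtuse:lemma} and Proposition~\ref{cyclehyp:prop}(1). The only cosmetic issue is the mid-proof detour where you momentarily test the wrong inequality before correcting yourself; in a clean write-up you should simply state $v_3\geq 3(v_3-G)$ and $v_3/2\geq 2(v_3-G)$ (equivalently $G\geq \tfrac{2}{3}v_3$ and $G\geq \tfrac{3}{4}v_3$) and move on.
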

\begin{proof}
Since $v_3\geq 3(v_3-G)$ and $v_3/2\geq 2(v_3-G)$, 
by equation~\eqref{voltot:eq} and Lemmas~\ref{3-obtuse}, \ref{1-obtuse} and
\ref{E:obtuse:lemma} we have
\begin{align*}
 d\cdot \vol(M)&  \leq (t_0+t_1-t_{1,1}-t_{1,2}-t_{1,n})v_3+Gt_{1,1}+t_{1,2}\frac{v_3}{2}\\
& =(t_0+t_1)v_3-(v_3-G)t_{1,1}-t_{1,2}\frac{v_3}{2}-t_{1,n}v_3\\ 
& \leq (t_0+t_1)v_3-(v_3-G)(t_{1,1}+2t_{1,2}+3t_{1,n})\\ 
& \leq (t_0+t_1)v_3-(v_3-G)E_\mathrm{bad}\ .
\end{align*}
Now the conclusion follows from the inequality
$t_0+t_1=\|z\|_\Z\leq d(\| M,\bb M\|+\vare)$
(see Proposition~\ref{cyclehyp:prop}).
\end{proof}

\begin{prop}\label{estimate2:prop}
 We have
$$
\| M,\bb M\|+\vare \geq \frac{7}{4} \|\bb M\| -\frac{E_\mathrm{bad}}{2d}\ .
$$
\end{prop}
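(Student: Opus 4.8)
The plan is to run a counting argument on the boundary pseudomanifold $\bb P$ of the type used in \cite[Theorem 1.4]{BFP15} to establish \eqref{54eq}, but keeping careful track of which edges of $\bb|P|$ are \emph{bad}. Recall that $z$ is the cycle from Proposition~\ref{cyclehyp:prop}, $P$ its associated pseudomanifold, $[z]=d\cdot[M,\bb M]_\Z$, and $\|z\|_\Z=t_0+t_1=N$. By \eqref{eq:boundary}, $c(\bb P)\geq d\|\bb M\|$, where $c(\bb P)$ is the number of triangles of $\bb P$. By Proposition~\ref{cyclehyp:prop}(3)--(4), a simplex of $P$ in $\Omega_0$ contributes no triangle to $\bb P$ and has at most two edges in $\bb|P|$, while a simplex in $\Omega_1$ contributes exactly one triangle to $\bb P$ and has at most three edges in $\bb|P|$. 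Since $t_2=t_3=t_4=0$, we have $c(\bb P)=t_1$, hence $t_1=c(\bb P)\geq d\|\bb M\|$.

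First I would get a lower bound on $\|\bb M\|$ from the combinatorics of $\bb P$. Since $\bb P$ is a closed orientable $2$-pseudomanifold, by Lemma~\ref{basic:gluing:lemma} its realization $\bb|P|$ is a closed orientable surface, which moreover admits a degree-$d$ map to $\bb M$ (the restriction of $f$); by Proposition~\ref{degree:prop} and Proposition~\ref{vol:surf:prop}, $\|\bb M\|\leq \|\bb |P|\|/d = \max\{0,-2\chi(\bb|P|)\}/d$. Writing $V,E,F$ for the numbers of vertices, edges and triangles of $\bb|P|$, we have $3F=2E$ (each triangle has three edges, each edge is shared by exactly two triangles since $\bb P$ has no boundary), and $-2\chi(\bb|P|)=-2(V-E+F)=-2V+2E-2F = -2V+ \tfrac{2}{3}E$ using $F=\tfrac{2}{3}E$; equivalently $-2\chi=\tfrac{1}{3}F-2V$ after substituting $E=\tfrac32 F$. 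Next I would split $E=E_{\mathrm{nice}}+E_{\mathrm{bad}}$ and bound $E_{\mathrm{nice}}$ in terms of the $\Omega_0$-simplices: each edge of $\bb|P|$ is the image of some edge of $\bb P$, and a \emph{nice} edge of $\bb|P|$ is, by definition, the image of an edge of a simplex in $\Omega_0$; since each of the $t_0$ simplices of $\Omega_0$ has at most two edges in $\bb|P|$, we get $E_{\mathrm{nice}}\leq 2t_0$.

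The heart of the argument is then the chain of inequalities combining $t_1\geq d\|\bb M\|$ with the Euler-characteristic bound and the edge count. Dropping the $-2V\leq 0$ term and using $F=t_1$, $E=E_{\mathrm{nice}}+E_{\mathrm{bad}}\leq 2t_0+E_{\mathrm{bad}}$ together with $E=\tfrac32 t_1$, one deduces a relation of the form $3t_1\leq 4t_0 + 2E_{\mathrm{bad}}$ (coming from $\tfrac32 t_1 \le 2t_0 + E_{\mathrm{bad}}$), and also $d\|\bb M\|\leq \|\bb|P|\|\leq \tfrac13 t_1$ from $-2\chi\le \tfrac13 F$. Combining, $d\|\bb M\|\le \tfrac13 t_1$ gives $t_1\ge 3d\|\bb M\|$, and then $4t_0\ge 3t_1-2E_{\mathrm{bad}}$ yields $t_0\ge \tfrac34 t_1-\tfrac12 E_{\mathrm{bad}}\ge \tfrac94 d\|\bb M\|-\tfrac12 E_{\mathrm{bad}}$; adding $t_0+t_1$ gives $\|z\|_\Z=t_0+t_1\ge \tfrac{7}{4}d\|\bb M\| - \tfrac12 E_{\mathrm{bad}}$ (using $t_1\ge d\|\bb M\|$ again for the extra $d\|\bb M\|$ in $t_1$, so that $t_0+t_1\ge(\tfrac94+\text{... adjust})d\|\bb M\|-\tfrac12 E_{\mathrm{bad}}$ lands on the coefficient $\tfrac74$). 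Finally divide by $d$ and use $\|z\|_\Z/d\leq \|M,\bb M\|+\vare$ from Proposition~\ref{cyclehyp:prop}(1).

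The main obstacle I anticipate is bookkeeping the identifications of edges between $\bb P$ and $\bb|P|$ correctly — in particular making sure that ``bad'' edges of $\bb|P|$ (which, as the text warns, are \emph{not} images of bad edges of $\bb P$) are counted with the right multiplicity, and pinning down the exact numerical coefficients so that the $\tfrac74$ and the $\tfrac{1}{2d}$ come out precisely rather than with slack. I would handle this by reproducing the double-counting lemma of \cite[Section 4]{BFP15} verbatim for the nice/bad dichotomy and tracking each inequality's equality case, so that the constants are sharp exactly when every $\Omega_0$-simplex realizes two boundary edges and $\bb|P|$ has no vertices, i.e. on the extremal configurations.
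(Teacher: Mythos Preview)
Your approach is essentially the paper's, but you have obscured it with an unnecessary and error-prone Euler-characteristic detour. The clean argument, which is exactly what the paper does, is already contained in your second paragraph: since $\bb P$ is a closed $2$-pseudomanifold with $t_1$ triangles, the total number of edges of $\bb|P|$ is $\tfrac{3}{2}t_1$, so $E_{\mathrm{nice}}=\tfrac{3}{2}t_1-E_{\mathrm{bad}}$; combined with your (correct) inequality $E_{\mathrm{nice}}\leq 2t_0$, this gives $t_0\geq \tfrac{3}{4}t_1-\tfrac{1}{2}E_{\mathrm{bad}}$. Then simply add $t_1$ to both sides and use $t_1\geq d\|\bb M\|$ once:
\[
t_0+t_1\;\geq\;\tfrac{7}{4}t_1-\tfrac{1}{2}E_{\mathrm{bad}}\;\geq\;\tfrac{7}{4}d\,\|\bb M\|-\tfrac{1}{2}E_{\mathrm{bad}}\ ,
\]
and divide by $d$ using $\|z\|_\Z/d\leq \|M,\bb M\|+\vare$. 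No appeal to $\chi(\bb|P|)$ or $\|\bb|P|\|$ is needed beyond the already-established $t_1\geq d\|\bb M\|$.

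Your third paragraph goes wrong in two places. First, the arithmetic: from $E=\tfrac{3}{2}F$ one gets $-2\chi=-2V+2E-2F=-2V+F$, not $-2V+\tfrac{1}{3}F$; so dropping $-2V$ yields only $\|\bb|P|\|\leq t_1$, hence $t_1\geq d\|\bb M\|$, which you already had and which does \emph{not} give $t_1\geq 3d\|\bb M\|$. Second, and as a consequence, your final assembly (``$\tfrac{9}{4}+\text{\ldots adjust}$ lands on $\tfrac{7}{4}$'') never actually closes; the coefficient $\tfrac{7}{4}$ comes directly from $\tfrac{3}{4}+1$ in the one-line computation above, not from any Euler-characteristic bound. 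Drop the detour and the proof is complete.
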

\begin{proof}
Recall that $\bb |P|$ is the union of a finite number of closed orientable surfaces, and that the pseudomanifold $P$ comes equipped
with a degree $d$ map $f\colon (|P|,\partial |P|)\rightarrow (M,\partial M)$.
Therefore, we have
that $\| \partial |P|\|\geq d\cdot \|\partial M\|$.
Since $\bb |P|$ is decomposed into $t_1$ triangles, this implies
that 
\begin{equation}\label{t_1:eq}
t_1\geq d\cdot \|\bb M\|\ .
\end{equation}

For the number $E_\mathrm{nice}$ of nice
edges  of $\bb |P|$ 
we have the obvious equality
$E_\mathrm{nice}= (3/2)t_1-E_\mathrm{bad}$. 
By definition, 
every nice edge is contained in a simplex in $\Omega_0$,
and by point (4) of Proposition~\ref{cyclehyp:prop} any such simplex has at most
$2$ edges on $\bb |P|$, so
\begin{equation}\label{t_0:eq}
 t_0\geq \frac{E_\mathrm{nice}}{2}=\frac{3}{4}t_1-\frac{E_\mathrm{bad}}{2}\ .
\end{equation}
Together with Proposition~\ref{cyclehyp:prop}, Inequalities~\eqref{t_1:eq}
and~\eqref{t_0:eq} imply that
$$
d(\|M,\bb M\|+\vare)\geq t_0+t_1\geq \frac{7}{4}t_1-\frac{E_\mathrm{bad}}{2}\geq \frac{7d}{4}\cdot \|\bb M\|-\frac{E_\mathrm{bad}}{2}\ ,
$$
which finishes the proof of the proposition.
\end{proof}

We are now ready to prove Theorem~\ref{hyperbolic:thm}. In fact, if we set $k_0=E_\mathrm{bad}/(2d)$,
then putting together
Propositions~\ref{estimate1:prop} and~\ref{estimate2:prop} we get
$$
\|M,\bb M\|+\vare \geq \max\left\{\frac{\vol(M)}{v_3}+2\left(1-\frac{G}{v_3}\right)k_0,\,
\frac{7}{4}\cdot \|\bb M\|-k_0\right\}\ ,
$$
whence
$$
\|M,\bb M\|+\vare \geq \min_{k\geq 0} \max\left\{\frac{\vol(M)}{v_3}+2\left(1-\frac{G}{v_3}\right)k,\,
\frac{7}{4}\cdot \|\bb M\|-k\right\}\ .
$$
If $(7/4)\|\bb M\|\leq \vol(M)/v_3$, then the statement of Theorem~\ref{hyperbolic:thm}
is an obvious consequence of Jungreis' inequality~\eqref{jung:eq}. Otherwise,
the right-hand side of the inequality above is equal to
$$
\frac{\vol(M)}{v_3}+\frac{v_3-G}{2(3v_3-2G)}\left(7\|\bb M\|-4\frac{\vol(M)}{v_3}\right)
$$
which finishes the proof of Theorem~\ref{hyperbolic:thm} since $\vare$ is arbitrary.

\section{Small hyperbolic manifolds with geodesic boundary}\label{last:sec}
We start by recalling some results from~\cite{FriMaPe} and~\cite{KM}.
An ideal triangulation of a $3$-manifold $M$
is a homeomorphism between $M$ and $|P|\setminus V(|P|)$, where $P$
is a $3$-pseudomanifold and $V(|P|)$ is a regular open neighbourhood of the 
vertices of $|P|$. In other words, it is a realization of $M$ as the space obtained
by gluing some topological \emph{truncated tetrahedra}, \emph{i.e.} tetrahedra with
neighbourhoods of the vertices removed (see Figure~\ref{truncated:fig}).

\begin{figure}
\begin{center}
\includegraphics{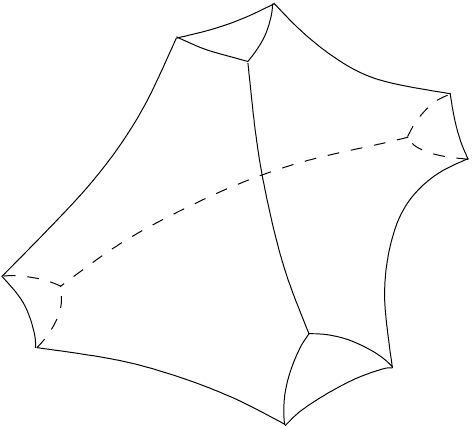}
\caption{A truncated tetrahedron.\protect\label{truncated:fig}}
\end{center}
\end{figure}

As in the introduction, let $\calM_g$, $g\geq 2$, be the class of
3-manifolds with boundary $M$ that admit an ideal triangulation by $g$ tetrahedra
and have Euler characteristic $\chi(M)=1-g$ (so $\chi(\bb M)=2-2g$).
We also denote by $\overline\calM_g$ the set
of hyperbolic $3$-manifolds $M$ with connected geodesic boundary such that 
$\chi(\bb M)=2-2g$. 
For $g\geq 2$, let $\Delta_g\subseteq \matH^3$ be the regular truncated tetrahedron
of dihedral angle $\pi/(3g)$ (see \emph{e.g.}~\cite{Kojima1, FriPe} for
the definition of hyperbolic truncated tetrahedron). It is proved in~\cite{Kojima} that
\begin{align}
\vol(\Delta_g)&=8L\left(\frac{\pi}{4}\right)-3\int_0^{\frac{\pi}{3g}}
{\rm arccosh}\, \left(\frac{\cos t}{2\cos t-1}\right)\, {\rm d}t \notag\\
\label{voldg}\\
& =4G-3\int_0^{\frac{\pi}{3g}}
{\rm arccosh}\, \left(\frac{\cos t}{2\cos t-1}\right)\, {\rm d}t\ . \notag
\end{align}

The following result lists some known properties of manifolds belonging to $\calM_g$.
The last point implies that $\calM_g$ coincides with the set of the elements
of $\overline\calM_g$ of smallest volume.

\begin{prop}[\cite{FriMaPe,KM}]\label{Mg:prop}
Let $g\geq 2$. Then:
\begin{enumerate}
\item
the set $\calM_g$ is nonempty;
\item 
every element of $\calM_g$ admits a hyperbolic structure with geodesic boundary
(which is unique up to isometry by Mostow Rigidity Theorem);
\item
the boundary of every element of $\calM_g$ is connected, so $\calM_g\subseteq \overline\calM_g$; 
\item
if $M\in\calM_g$, then $M$ decomposes into the union of $g$ copies of $\Delta_g$, so
in particular $\vol(M)=g\vol(\Delta_g)$;
\item
if $M\in\overline\calM_g$,
then $\vol(M)\geq g\vol(\Delta_g)$.
\end{enumerate}
\end{prop}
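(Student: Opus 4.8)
\textbf{Proof proposal for Proposition~\ref{Mg:prop}.}

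The plan is to gather the relevant structural facts about ideal triangulations and the combinatorics of the class $\calM_g$, and then invoke the deep geometric input of Miyamoto~\cite{KM} for the volume minimality statements.

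First I would address points (1), (3) and (4), which are essentially combinatorial. For (1), one exhibits at least one $3$-manifold with an ideal triangulation by $g$ tetrahedra and Euler characteristic $1-g$: the standard construction is to glue $g$ truncated tetrahedra in a chain or cycle so that the resulting $\bb |P|$ is connected; Kojima's examples in~\cite{Kojima} already provide such manifolds for all $g\geq 2$, and one checks directly that $\chi=1-g$ (using that each ideal triangulation by $g$ tetrahedra contributes the correct alternating count of simplices, together with $\chi(\bb M)=2\chi(M)$). For (3), if $M\in\calM_g$ then $\bb M=\bb|P|$ is a closed orientable surface (by Lemma~\ref{basic:gluing:lemma}, since $P$ is an orientable $3$-pseudomanifold), and one argues that it must be connected: otherwise $M$ would be disconnected or would have more than one truncation surface, forcing the number of ideal vertices of $P$ to be at least two, which — after a Euler characteristic bookkeeping with the constraint $\chi(M)=1-g$ and exactly $g$ tetrahedra — is impossible. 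Combined with point (2), this yields $\calM_g\subseteq\overline\calM_g$.

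Next, point (2) is exactly the hyperbolization statement for ideally triangulated manifolds: any $M$ admitting an ideal triangulation by $g$ tetrahedra satisfies the hypotheses of Thurston's hyperbolization for Haken manifolds with boundary (equivalently, one appeals directly to~\cite{FriMaPe}, where such manifolds are shown to carry a hyperbolic structure with geodesic boundary), and uniqueness is Mostow--Prasad rigidity in the bounded-geodesic setting. For point (4) I would cite the rigidity of the geometric ideal triangulation: by~\cite{FriMaPe,KM}, the unique hyperbolic structure on $M\in\calM_g$ realizes the combinatorial ideal triangulation geometrically, and by the symmetry of the gluing each ideal tetrahedron becomes a \emph{regular} truncated tetrahedron; since there are $g$ tetrahedra meeting along edges and the link of each edge is a disk, the common dihedral angle must be $\pi/(3g)$, so every piece is isometric to $\Delta_g$ and $\vol(M)=g\vol(\Delta_g)$ with $\vol(\Delta_g)$ given by~\eqref{voldg}.

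The main obstacle — and the one genuine input I cannot reprove — is point (5), the volume estimate $\vol(M)\geq g\vol(\Delta_g)$ for all $M\in\overline\calM_g$. This is precisely Miyamoto's theorem~\cite{KM}: among all hyperbolic $3$-manifolds with connected geodesic boundary and $\chi(\bb M)=2-2g$, the minimal volume equals $g\vol(\Delta_g)$ and is attained exactly by the elements of $\calM_g$. Miyamoto's proof proceeds by decomposing $M$ along the cut locus of the geodesic boundary into right-angled polyhedral pieces and estimating their volumes by a subtle isoperimetric argument relating volume to the area of $\bb M$; I would simply quote this result. Together with (4), point (5) immediately gives that $\calM_g$ consists exactly of the volume-minimizers in $\overline\calM_g$, which is the final assertion preceding the statement.
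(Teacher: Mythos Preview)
The paper does not actually prove this proposition: its entire argument is the sentence ``Items (2) and (3) and (4) are proved in~\cite{FriMaPe}, items~(1) and (5) in~\cite{KM}.'' Your proposal takes the same route --- each item is ultimately deferred to those references --- but you add informal sketches of the underlying arguments, so in substance you agree with the paper.

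Two of your sketches are looser than you suggest. For (3), the constraint $\chi(M)=1-g$ with $g$ tetrahedra and all $2$-faces paired gives, via $\chi(|P|)=v-e+g$ and $\chi(|P|)=v-\chi(M)$, only $e=1$; to conclude $v=1$ you still need the separate combinatorial observation that if the unique edge of $|P|$ had two distinct endpoints then every edge of every tetrahedron would have to join those two vertices, which is impossible for a $3$-simplex. For (4), ``the symmetry of the gluing'' is not what forces each piece to be regular --- distinct elements of $\calM_g$ have different and generally asymmetric face pairings. The genuine mechanism in \cite{FriMaPe} is that with a single edge the compatibility equations for a geometric structure with totally geodesic boundary are solved by making all dihedral angles equal to $\pi/(3g)$, and Mostow rigidity then fixes this as the unique hyperbolic structure. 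These are refinements of your glosses rather than real gaps, and since you cite the same sources the paper does, your proposal is acceptable.
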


Items (2) and (3) and (4) are proved in~\cite{FriMaPe}, 
items~(1) and (5) in~\cite{KM}.

\begin{prop}
 Fix $g\geq 2$. Then all the elements of $\calM_g$ share the same simplicial volume.
\end{prop}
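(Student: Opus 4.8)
The plan is indirect: since the common value of the simplicial volume on $\calM_g$ is not known explicitly, I would instead prove that any two elements of $\calM_g$ admit a common finite covering, and then conclude by multiplicativity of $\|\cdot,\bb\cdot\|$ under finite covers. The trick to produce the common cover is to double along the boundary, observe that the doubles are commensurable, and cut a common cover of the doubles back open. Fix $M_1,M_2\in\calM_g$, which we may assume orientable (otherwise pass to orientation double covers, which multiplies all volumes and simplicial volumes by the same factor). Let $DM_i$ be the double of $M_i$ along $\bb M_i$: since $\bb M_i$ is totally geodesic, $DM_i$ is a closed oriented hyperbolic $3$-manifold with $\vol(DM_i)=2\vol(M_i)=2g\vol(\Delta_g)$ by Proposition~\ref{Mg:prop}, containing $\bb M_i$ as a two-sided totally geodesic surface that separates it into two copies of $M_i$, and it is tiled by $2g$ copies of $\Delta_g$ (the copies in each half glued along hexagonal faces, the two halves glued along triangular faces). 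The dihedral angles of $\Delta_g$ are $\pi/(3g)$ along hexagonal--hexagonal edges and $\pi/2$ along hexagonal--triangular edges, both submultiples of $\pi$, and no two triangular faces are adjacent; hence by the Poincar\'e polyhedron theorem the group generated by the reflections in the faces of a fixed copy $\Delta_g\subset\matH^3$, enlarged by the finite symmetry group of $\Delta_g$, is a cocompact lattice $\widehat\Gamma<\Isom(\matH^3)$ whose tiles are the $\widehat\Gamma$-orbit of $\Delta_g$. Comparing local combinatorics ($6g$ tiles around each hexagonal--hexagonal edge, $4$ around each hexagonal--triangular edge) shows that the $\Delta_g$-tiling of $\matH^3=\widetilde{DM_i}$ coincides with the $\widehat\Gamma$-tiling, so $\pi_1(DM_i)$ is conjugate into $\widehat\Gamma$ with finite index $\vol(DM_i)/\vol(\matH^3/\widehat\Gamma)$, independent of $i$; in particular $DM_1$ and $DM_2$ are commensurable (cf.~\cite{FriMaPe}).

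Next, conjugate so that $\pi_1(DM_1),\pi_1(DM_2)\subseteq\widehat\Gamma$ and set $\Lambda=\pi_1(DM_1)\cap\pi_1(DM_2)$, a torsion-free finite-index subgroup of both. Then $Y=\matH^3/\Lambda$ is a closed oriented hyperbolic $3$-manifold covering $DM_i$ with degree $d_i$, and $\vol(DM_1)=\vol(DM_2)$ forces $d_1=d_2=:d$. Let $\Sigma\subset\matH^3$ be the $\widehat\Gamma$-invariant union of all hyperplanes carrying a triangular face of some tile of the $\widehat\Gamma$-tiling. Because the triangular faces of the $\Delta_g$'s making up $DM_i$ are exactly those lying on $\bb M_i$, the image of $\Sigma$ in $DM_i=\matH^3/\pi_1(DM_i)$ is $\bb M_i$; hence the image $\widehat\Sigma$ of $\Sigma$ in $Y$ is simultaneously the preimage of $\bb M_1$ under $Y\to DM_1$ and the preimage of $\bb M_2$ under $Y\to DM_2$. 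In particular $\widehat\Sigma$ is a two-sided totally geodesic surface, and cutting $Y$ along it yields a compact hyperbolic $3$-manifold $\widehat Y$ with geodesic boundary, possibly disconnected. The covering $Y\to DM_i$ restricts to a covering $\widehat Y\to M_i\sqcup\overline{M_i}$, the target being $DM_i$ cut along $\bb M_i$; since $\overline{M_i}\cong M_i$ and the simplicial volume ignores orientation, every component of $\widehat Y$ covers $M_i$. Choosing one such component $Y_0$, it covers $M_1$ with some degree $a_1$ and $M_2$ with some degree $a_2$.

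Finally, $a_1\vol(M_1)=\vol(Y_0)=a_2\vol(M_2)$ together with $\vol(M_1)=\vol(M_2)=g\vol(\Delta_g)$ (Proposition~\ref{Mg:prop}) gives $a_1=a_2=:a$. The simplicial volume is multiplicative under finite coverings of compact manifolds with boundary: the inequality $\|Y_0,\bb Y_0\|\geq a\,\|M_i,\bb M_i\|$ is Proposition~\ref{degree:prop}, and the reverse inequality follows from the usual transfer map on relative chains (cf.~\cite{Gromov}). Hence $a\,\|M_1,\bb M_1\|=\|Y_0,\bb Y_0\|=a\,\|M_2,\bb M_2\|$, and so $\|M_1,\bb M_1\|=\|M_2,\bb M_2\|$.

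The delicate point is the construction in the second paragraph: exhibiting the preimages in $Y$ of $\bb M_1$ and of $\bb M_2$ as the \emph{same} surface $\widehat\Sigma$, which is precisely what allows one to cut $Y$ open into a single manifold $Y_0$ covering both $M_1$ and $M_2$. Everything else is either classical hyperbolic geometry (the Poincar\'e polyhedron theorem and the resulting commensurability of the doubles) or soft facts about the simplicial volume.
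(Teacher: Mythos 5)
Your proof is correct, and it reaches the same destination — commensurability of any two elements of $\calM_g$ plus multiplicativity of both volumes under finite covers — but it takes a genuinely more roundabout route. The paper works directly with the universal covers: $\wdtM_1$ and $\wdtM_2$ are convex subsets of $\matH^3$ bounded by disjoint geodesic hyperplanes, both assembled from copies of $\Delta_g$ glued only along hexagonal faces, hence isometric; since $\Isom(\wdtM_1)$ is discrete, a cited lemma (\cite[Lemma~2.4]{Fricomm}) produces a common finite cover $M'=\wdtM_1/(\pi_1(M_1)\cap\pi_1(M_2))$ immediately, with no doubling and no cutting. Your detour — double along the totally geodesic boundary, invoke the Poincar\'e polyhedron theorem to realize the $\Delta_g$-tiling of $\matH^3$ as the tiling of a cocompact reflection-plus-symmetry lattice $\widehat\Gamma$, intersect the two doubled fundamental groups inside $\widehat\Gamma$, and then cut the resulting closed cover back open along the common totally geodesic preimage $\widehat\Sigma$ — is self-contained where the paper cites a lemma, which is a virtue, but it costs you several extra steps and one point that deserves a sentence of justification: the assertion that the lifted $\Delta_g$-tiling of $\widetilde{DM_i}$ \emph{coincides} with the $\widehat\Gamma$-tiling needs more than matching edge valences. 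The clean argument is that every symmetry of a (hexagonal or triangular) face of $\Delta_g$ extends to a symmetry of $\Delta_g$, so across any shared face the neighbouring tile is forced to be the reflected copy, and a propagation argument from a common base tile then identifies the two tilings globally. With that point filled in, the argument is sound, and the final bookkeeping (equal covering degrees from equal volumes, multiplicativity via Proposition~\ref{degree:prop} and transfer) is exactly what the paper uses.
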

\begin{proof}
 Take $M_1,M_2\in\calM_g$, and let us consider the universal
coverings $\wdtM_1$ and $\widetilde{M}_2$. Both $\wdtM_1$ and $\widetilde{M}_2$
are obtained as the union in $\matH^3$ of a countable family of 
copies of $\Delta_g$, which are adjacent along their hexagonal faces, and this easily implies that $\wdtM_1$ and $\widetilde{M}_2$ are isometric
to each other. Since the isometry group of $\wdtM_1$ is discrete, this fact can be used to
show that $M_1$ and $M_2$ are commensurable, \emph{i.e.}~there exists a hyperbolic $3$-manifold
with geodesic boundary $M'$ that is the total space of finite coverings
$p_1\colon M'\to M_1$ and $p_{2}\colon M'\to M_2$ (see \cite[Lemma 2.4]{Fricomm}).
Since the Riemannian volume and the simplicial volume are multiplicative with respect to finite coverings, 
this implies in turn that $\|M_1,\bb M_1\|/\vol(M_1)=\|M_2,\bb M_2\|/\vol(M_2)$, which finishes the proof since $\vol(M_1)=\vol(M_2)$.
\end{proof}

Let us prove Corollary~\ref{minimal:cor} and see that for $M\in \calM_2 \cup \calM_3 \cup \calM_4$, the bounds provided by the corollary are indeed sharper 
than Jungreis' inequality~\eqref{jung:eq}
and  inequality~\eqref{54eq}. 
\begin{itemize}
\item
If $M\in\overline\calM_2$, then $\|\bb M\|=4$ and
$\vol(M)\geq 2\vol(\Delta_2)\approx 6.452$. Applying 
Theorem~\ref{hyperbolic:thm} we get
$$
\| M,\bb M\|\geq 6.461\approx 1.615 \cdot \|\bb M\| .
$$
Also observe that, if $M\in\calM_2$, then
$
{\vol(M)}/{v_3}\approx 6.357\ .
$
\item
If $M\in\overline\calM_3$, then $\|\bb M\|=8$ 
and $\vol(M)\geq 3\vol(\Delta_3)\approx 10.429$.
Applying 
Theorem~\ref{hyperbolic:thm} we get
$$
\| M,\bb M\|\geq 10.882\approx 1.360 \cdot \|\bb M\| .
$$
Also observe that, if $M\in\calM_3$, then
$
{\vol(M)}/{v_3}\approx 10.274\ .
$
\item
If $M\in\overline\calM_4$, then $\|\bb M\|=12$ and 
$\vol(M)\geq 4\vol(\Delta_4)\approx 14.238$.
Applying 
Theorem~\ref{hyperbolic:thm} we get
$$
\| M,\bb M\|\geq 15.165\approx 1.264 \cdot \|\bb M\| .
$$
If $M\in\calM_4$, then
$
{\vol(M)}/{v_3}\approx 14.097\ .
$
\end{itemize}

Finally, let us take $M\in\calM_g$, $g\geq 5$ and show that the bound 
$$\|M,\bb M\|\geq \frac{5}{4}\|\partial M\|$$ proved in~\cite{BFP15} is sharper than the ones given by 
inequality~\eqref{jung:eq} and  Theorem~\ref{hyperbolic:thm}. Note that it is sufficient to show that
$$
\frac{5}{4}\|\bb M\|>\frac{7\|\bb M\| (v_3-G)+2\vol(M)}{2(3v_3-2G)}>\frac{\vol(M)}{v_3}\ .
$$
Using that $\|\bb M\|=4(g-1)$ and $\vol(M)=g\vol(\Delta_g)$, 
after some straightforward algebraic manipulations the first inequality and the second inequality may be rewritten respectively as follows:
$$
\left(1-\frac{1}{g}\right)(v_3+4G)>\vol(\Delta_g)\ ,\qquad
7\left(1-\frac{1}{g}\right)v_3>\vol(\Delta_g)\ .
$$
We know from equation~\eqref{voldg} that $\vol(\Delta_g)<4G$. Therefore, for every $g\geq 5$ we have
$$
\left(1-\frac{1}{g}\right)(v_3+4G)\geq \frac{4}{5}(v_3+4G)>4G>\vol(\Delta_g)
$$
and
$$
7\left(1-\frac{1}{g}\right)v_3\geq \frac{28}{5}v_3>4G>\vol(\Delta_g)\ ,
$$
whence the conclusion.

\bibliographystyle{amsalpha}
\bibliography{biblio}

\end{document}